\begin{document}
    \title{Error estimate of a consistent splitting  GSAV scheme for the Navier-Stokes equations\thanks{The work of X. Li is supported by the National Natural Science Foundation of China  grants  12271302, 11971407. The work of J. Shen is supported in part by NSF grant DMS-2012585 and AFOSR  grant FA9550-20-1-0309. 
}}

    \author{ Xiaoli Li\thanks{School of Mathematics, Shandong University, Jinan, Shandong, 250100, P.R. China. Email: xiaolimath@sdu.edu.cn}.
        \and Jie Shen\thanks{Corresponding Author. Department of Mathematics, Purdue University, West Lafayette, IN 47907, USA. Email: shen7@purdue.edu}.
}

\maketitle

\begin{abstract}
	We carry out a rigorous error analysis of the first-order semi-discrete (in time) consistent splitting scheme  coupled with a generalized scalar auxiliary variable (GSAV) approach for the Navier-Stokes equations with no-slip boundary conditions. The scheme is linear,  unconditionally stable, and only requires solving a sequence of  Poisson type equations at each time step.  By using the build-in unconditional stability of the GSAV approach, we  derive  optimal global  (resp. local) in time  error estimates in the two (resp. three) dimensional case for the velocity and pressure approximations.  

\end{abstract}

 \begin{keywords}
Navier-Stokes; consistent splitting; scalar auxiliary variable (SAV);  energy stability; error estimates
 \end{keywords}
   \begin{AMS}
35Q30, 65M12, 65J15.
    \end{AMS}
\markboth{XIAOLI LI AND JIE SHEN } {Error estimate of a consistent splitting  GSAV scheme}
 \section{Introduction}
We consider  in this paper numerical approximation of the following time-dependent incompressible Navier-Stokes equations:
  \begin{subequations}\label{e_model}
    \begin{align}
     \frac{\partial \textbf{u}}{\partial t}+ ( \textbf{u}\cdot \nabla ) \textbf{u}
     -\nu\Delta\textbf{u}+\nabla p=\textbf{f}
     \quad &\ in\ \Omega\times J,
      \label{e_modelA}\\
      \nabla\cdot\textbf{u}=0
      \quad &\ in\ \Omega\times J,
      \label{e_modelB}\\
     \textbf{u}=\textbf{0} \quad &\ on\ \partial\Omega\times J,
      \label{e_modelC}
    \end{align}
  \end{subequations}
where $\Omega$ is an open bounded domain in $\mathbb{R}^d$ ($d=2,3$) with a sufficiently smooth boundary $\partial \Omega$, $J=(0,T]$, $(\textbf{u},p)$ represent the unknown  velocity and pressure,  $\textbf{f}$ is an external body force,
 $\nu>0$ is the  viscosity coefficient and $\textbf{n}$ is the unit outward normal  of the domain $\Omega$.
 
There exists a large  number of work devoted to the numerical approximations of the Navier-Stokes equations, see, for instance, \cite{girault1979finite,temam2001navier,glowinski2003finite,gunzburger2012finite}. and the references therein.  As we all known, the nonlinearity and the coupling of velocity and pressure have long been the main source of difficulties in both numerical analysis and simulations for the Navier-Stokes equations.  In view of numerical computation, it is desirable to treat the nonlinear term explicitly so that one only needs to solve simple linear equations with constant coefficients at each time step. However, a simple explicitly treatment usually leads to a severe stability constraint on the time step. 

The recently developed schemes \cite{lin2019numerical,li2020error,huang2021stability,li2022new,wu2022new}
 based on the scalar auxiliary variable (SAV) approach \cite{shen2019new}  with explicit treatment of nonlinear term can be unconditionally energy diminishing, and ample numerical results presented in the above work demonstrate that the above (implicit-explicit) IMEX type schemes (i.e. the nonlinear term is treated explicitly) are very  efficient and robust. 
 
On the other hand, there are in general two classes of numerical approaches to deal with the incompressible constraint: the coupled approach and the decoupled approach. The coupled approach is computationally expensive since it requires solving a saddle point problem at each time step \cite{girault1979finite,brezzi2012mixed,elman2014finite}. The decoupled approach, originated from the so called projection method \cite{chorin1968numerical,temam1969approximation}, can be  more effective for the reason that one only needs to solve a sequence of Poisson type equations to solve at each time step.  There have been extensive  efforts in   constructing various projection type schemes which 
 can be roughly classified  into  three categories \cite{GMS06}: the pressure-correction method \cite{shen1992error,weinan1995projection,guermond2004error,Shen2012Modeling,li2022new}, the velocity-correction method \cite{guermond2003velocity,serson2016velocity} and the consistent splitting method \cite{guermond2003new,Johnston2004Accurate,shen2007error} (see also the gauge method \cite{E2003Gauge,Nochetto2003Gauge}).  Among these, the consistent splitting scheme has outstanding advantages in the following two aspects: (i) It is free of the operator splitting error so it can achieve the full accuracy of the time discretization; (ii)  The inf-sup condition between the velocity and the pressure approximation spaces is not mandatory from a computational point of view.

 However, there are only very limited works on the stability and error analysis of the consistent splitting methods. Liu et al. derived a key inequality in  \cite{liu2007stability}  for the commutator between the Laplacian and Leray projection operators, and established local error estimates in \cite{liu2009error} for the first-order consistent splitting schemes in two- and three-dimensional cases. Recently, based on the generalized SAV approach \cite{MR4383075},
  Huang et al.  constructed high-order consistent splitting schemes for the Navier-Stokes equations with periodic boundary conditions in  \cite{huang2021stability} and no-slip boundary conditions in \cite{wu2022new}. Furthermore, in the case of  periodic boundary conditions, they  established  in  \cite{huang2021stability}   optimal error estimates (up to fith-order)  which are globally (resp. locally) in time  in the two (resp. three) dimensional case.  However,  it is non trivial to extend the corresponding error analysis to the case with non-periodic boundary conditions: (i) additional difficulty due to the pressure Poisson equation; and (ii) weaker  stability result compared  with the case of periodic boundary conditions. 
 
The main purpose of this paper is to carry out a rigorous error analysis for the first-order  consistent splitting SAV scheme for the Navier-Stokes equations with no-slip boundary conditions. 
	Our main contributions are:
\begin{itemize}
\item Global in time  error estimates in  $L^{\infty}(0,T;H^1(\Omega)) \bigcap L^{2}(0,T;H^2(\Omega))$ for the velocity and\newline
 $ L^{\infty}(0,T;L^2(\Omega)) \bigcap  L^{2}(0,T;H^1(\Omega))$ for the pressure  are established in the two-dimensional case. To the best of our knowledge, this appears to be the first global-in-time error estimate for a consisting splitting scheme with no-slip boundary conditions.
\item Local in time  error estimates in $L^{\infty}(0,T_*;H^1(\Omega)) \bigcap L^{2}(0,T_*;H^2(\Omega))$ $(T_* \leq T)$ for the velocity and $ L^{\infty}(0,T_*;L^2(\Omega)) \bigcap  L^{2}(0,T_*;H^1(\Omega))$ for the pressure  are established in the three-dimensional case. 
\end{itemize}

The paper is organized as follows. In Section 2,  we provide some preliminaries. In Section 3, we construct the first-order consistent-splitting scheme based on the SAV approach. In Section 4, we carry out a rigorous error estimates  for the first-order GSAV consistent splitting scheme. Some numerical experiments  are presented in Section 5 to validate our theoretical results.

  \section{Preliminaries}
We describe below some notations and results which will be frequently used in this paper.

Throughout the paper, we use $C$, with or without subscript, to denote a positive
constant, which could have different values at different appearances.

 Let $\Omega$ be an open bounded domain in $\mathbb{R}^d$ $(d=2,\,3)$, we will use the standard notations $L^2(\Omega)$, $H^k(\Omega)$ and $H^k_0(\Omega)$ to denote the usual Sobolev spaces over $\Omega$. The norm corresponding to $H^k(\Omega)$ will be denoted simply by $\|\cdot\|_k$. In particular, we use $\|\cdot\|$ to denote the norm in $L^2(\Omega)$. Besides, $(\cdot,\cdot)$ is used to denote the inner product in $L^2(\Omega)$, and boldface letters are used to denote  vector functions and vector spaces.

  We define
  $$\textbf{H}=\{ \textbf{v}\in \textbf{L}^2(\Omega): \text{div}\,\textbf{v}=0, \textbf{v}\cdot \textbf{n}|_{\Gamma}=0 \},\ \ \textbf{V}=\{\textbf{v}\in \textbf{H}^1_0(\Omega):  \text{div}\,\textbf{v}=0 \},$$
  and the Stokes operator
  $$ A\textbf{u}=-P_{H}\Delta\textbf{u},\ \ \forall \ \textbf{u}\in D(A)=\textbf{H}^2(\Omega)\cap\textbf{V},$$
where $P_{H}$ is the orthogonal projector in $\textbf{L}^2(\Omega)$ onto $\textbf{H}$ and the Stokes operator $A$ is an unbounded positive self-adjoint closed operator in $\textbf{H}$ with domain $D(A)$.

 We recall the inequalities below which will be used in the following \cite{temam2001navier,heywood1982finite}:
\begin{equation}\label{e_norm H2}
\aligned
\|\nabla\textbf{v}\|\leq c_1\|A^{\frac{1}{2}}\textbf{v}\|,\ \ \|\Delta\textbf{v}\|\leq c_1\|A\textbf{v}\|, \ \ \forall \textbf{v}\in D(A)=\textbf{H}^2(\Omega)\cap\textbf{V}.
\endaligned
\end{equation}
By using Poincar\'e inequality, we then derive from the above  that
\begin{equation}\label{e_norm H1}
\aligned
\|\textbf{v}\|\leq c_1\|\nabla\textbf{v}\|, \ \forall\textbf{v}\in \textbf{H}^1_0(\Omega),\ \
\|\nabla\textbf{v}\|\leq c_1\|A\textbf{v}\|, \ \ \forall \textbf{v}\in D(A),
\endaligned
\end{equation}
where $c_1$ is a positive constant depending only on $\Omega$.

Next the trilinear form $b(\cdot,\cdot,\cdot)$ is defined by
\begin{equation*}
\aligned
b(\textbf{u},\textbf{v},\textbf{w})=\int_{\Omega}(\textbf{u}\cdot\nabla)\textbf{v}\cdot \textbf{w}d\textbf{x}.
\endaligned
\end{equation*}
We can easily obtain that the trilinear form $b(\cdot,\cdot,\cdot)$ is a skew-symmetric with respect to its last two arguments, i.e.,
\begin{equation}\label{e_skew-symmetric1}
\aligned
b(\textbf{u},\textbf{v},\textbf{w})=-b(\textbf{u},\textbf{w},\textbf{v}),\ \ \forall  \ \textbf{u}\in \textbf{H}, \ \ \textbf{v}, \textbf{w}\in \textbf{H}^1_0(\Omega),
\endaligned
\end{equation}
and
\begin{equation}\label{e_skew-symmetric2}
\aligned
b(\textbf{u},\textbf{v},\textbf{v})=0,\ \ \forall \ \textbf{u}\in \textbf{H}, \ \ \textbf{v}\in \textbf{H}^1_0(\Omega).
\endaligned
\end{equation}

By applying a combination of integration by parts,  Holder's inequality,  and Sobolev inequalities \cite{Tema95,shen1992error}, we have that for $d\le 4$,

\begin{flalign}\label{e_estimate for trilinear form}
b(\textbf{u},\textbf{v},\textbf{w})\leq \left\{
   \begin{array}{l}
   c_2\|\textbf{u}\|_1\|\textbf{v}\|_1\|\textbf{w}\|_1,\ \ \forall  \ \textbf{u}, \textbf{v} \in \textbf{H}
   , \textbf{w}\in \textbf{H}^1_0(\Omega),\\
   c_2\|\textbf{u}\|_2\|\textbf{v}\|\|\textbf{w}\|_1, \ \ \forall \ \textbf{u}\in \textbf{H}^2(\Omega)\cap\textbf{H},\ \textbf{v} \in \textbf{H}, \textbf{w}\in \textbf{H}^1_0(\Omega),\\
   c_2\|\textbf{u}\|_2\|\textbf{v}\|_1\|\textbf{w}\|, \ \ \forall \ \textbf{u}\in \textbf{H}^2(\Omega)\cap\textbf{H},\ \textbf{v} \in \textbf{H}, \textbf{w}\in \textbf{H}^1_0(\Omega),\\
   c_2\|\textbf{u}\|_1\|\textbf{v}\|_2\|\textbf{w}\|, \ \ \forall \ \textbf{v}\in \textbf{H}^2(\Omega)\cap\textbf{H},\ \textbf{u}\in \textbf{H}, \textbf{w}\in \textbf{H}^1_0(\Omega),\\
   c_2\|\textbf{u}\|\|\textbf{v}\|_2\|\textbf{w}\|_1, \ \ \forall \ \textbf{v}\in \textbf{H}^2(\Omega)\cap\textbf{H},\ \textbf{u} \in \textbf{H}, \textbf{w} \in \textbf{H}^1_0(\Omega).
   \end{array}
   \right.
\end{flalign}
In addition, we have the following more precise inequalities \cite{temam1983nonlinear,huang2021stability}:
\begin{flalign}\label{e_estimate for trilinear form2}
b(\textbf{u},\textbf{v},\textbf{w})\leq \left\{
   \begin{array}{l}
    c_2\|\textbf{u}\|_1^{1/2}\|\textbf{u}\|^{1/2}\|\textbf{v}\|_1^{1/2}\|\textbf{v}\|_2^{1/2}\|\textbf{w}\|, \ \ \  d\le 2, \\
   c_2\|\textbf{u}\|_1 \| \nabla \textbf{v}\|_{1/2} \|\textbf{w}\|, \ \ \  d\le 3,
   \end{array}
   \right.
\end{flalign}
where $c_2$ is a positive constant depending only on $\Omega$.

We will frequently use the following discrete version of the Gronwall lemma \cite{shen1990long,HeSu07}:

\medskip
\begin{lemma} \label{lem: gronwall2}
Let $a_k$, $b_k$, $c_k$, $d_k$, $\gamma_k$, $\Delta t_k$ be nonnegative real numbers such that
\begin{equation}\label{e_Gronwall3}
\aligned
a_{k+1}-a_k+b_{k+1}\Delta t_{k+1}+c_{k+1}\Delta t_{k+1}-c_k\Delta t_k\leq a_kd_k\Delta t_k+\gamma_{k+1}\Delta t_{k+1}
\endaligned
\end{equation}
for all $0\leq k\leq m$. Then
 \begin{equation}\label{e_Gronwall4}
\aligned
a_{m+1}+\sum_{k=0}^{m+1}b_k\Delta t_k \leq \exp \left(\sum_{k=0}^md_k\Delta t_k \right)\{a_0+(b_0+c_0)\Delta t_0+\sum_{k=1}^{m+1}\gamma_k\Delta t_k \}.
\endaligned
\end{equation}
\end{lemma}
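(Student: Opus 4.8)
The plan is to reduce the one-step inequality \eqref{e_Gronwall3} to a standard discrete Gronwall form by summation, and then to close it through an elementary induction. First I would fix an arbitrary index $0\le n\le m$ and sum \eqref{e_Gronwall3} over $k=0,1,\dots,n$. The differences $a_{k+1}-a_k$ and $c_{k+1}\Delta t_{k+1}-c_k\Delta t_k$ telescope, leaving only $a_{n+1}-a_0$ and $c_{n+1}\Delta t_{n+1}-c_0\Delta t_0$ on the left. Since $c_{n+1}\Delta t_{n+1}\ge 0$, I would discard it, reindex $\sum_{k=0}^{n}b_{k+1}\Delta t_{k+1}=\sum_{k=1}^{n+1}b_k\Delta t_k$, and add the nonnegative quantity $b_0\Delta t_0$ to both sides. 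This yields, for every $0\le n\le m$,
\[
a_{n+1}+\sum_{k=0}^{n+1}b_k\Delta t_k\le G_n+\sum_{k=0}^{n}a_kd_k\Delta t_k,\qquad G_n:=a_0+(b_0+c_0)\Delta t_0+\sum_{k=1}^{n+1}\gamma_k\Delta t_k.
\]

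Because every $\gamma_k\Delta t_k\ge 0$, the forcing $G_n$ is nondecreasing in $n$, so $G_n\le G_m=:G$, which is exactly the bracketed factor on the right of \eqref{e_Gronwall4}; moreover $a_0\le G$. Discarding the nonnegative sum $\sum_k b_k\Delta t_k$ on the left gives the pure Gronwall inequality $a_{n+1}\le G+\sum_{k=0}^{n}a_kd_k\Delta t_k$. I would then show by induction on $n$ that $a_{n+1}\le G\,P_{n+1}$, where $P_n:=\prod_{k=0}^{n-1}(1+d_k\Delta t_k)$ and $P_0:=1$. The engine of the induction is the telescoping identity $P_{k+1}-P_k=P_k d_k\Delta t_k$: feeding the hypothesis $a_k\le G P_k$ into the sum gives $\sum_{k=0}^{n}G P_k d_k\Delta t_k=G(P_{n+1}-P_0)=G(P_{n+1}-1)$, so that $a_{n+1}\le G+G(P_{n+1}-1)=G P_{n+1}$.

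Finally I would return to the reduced inequality with $n=m$, keeping the $b$-sum on the left this time, and insert the bound $a_k\le G P_k$ just established; the same telescoping computation collapses its right-hand side to $G P_{m+1}$, giving
\[
a_{m+1}+\sum_{k=0}^{m+1}b_k\Delta t_k\le G\,P_{m+1}=G\prod_{k=0}^{m}(1+d_k\Delta t_k).
\]
The elementary estimate $1+x\le e^x$ then bounds $\prod_{k=0}^{m}(1+d_k\Delta t_k)\le\exp\big(\sum_{k=0}^{m}d_k\Delta t_k\big)$, and substituting $G=a_0+(b_0+c_0)\Delta t_0+\sum_{k=1}^{m+1}\gamma_k\Delta t_k$ reproduces \eqref{e_Gronwall4} verbatim. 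None of these steps is deep; the only points demanding care are the bookkeeping of the two telescoping sums and the index shifts on the $b$- and $\gamma$-sums, together with the observation that the $b$-sum must be carried on the left throughout, rather than discarded early, if it is to survive into the final estimate.
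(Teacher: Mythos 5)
Your proof is correct. The paper itself states this lemma without proof, citing it from the literature (\cite{shen1990long,HeSu07}), and your argument --- telescoping summation over $k$, discarding the nonnegative term $c_{n+1}\Delta t_{n+1}$, adding $b_0\Delta t_0$ to recover the full $b$-sum, then a strong induction built on the identity $P_{k+1}-P_k=P_kd_k\Delta t_k$ followed by the bound $1+x\le e^x$ --- is precisely the standard proof of this discrete Gronwall inequality, with the index bookkeeping and the retention of the $b$-sum on the left handled correctly.
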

To obtain the local error estimates in the three-dimensional case, we recall the following lemma  \cite{liu2007stability,liu2010stable}:

\medskip
\begin{lemma} \label{lem: local gronwall}
Suppose that $F: (0, \infty) \rightarrow (0, \infty) $ is continuous and increasing, and let 
$T_*$ satisfy that  $0<T_*<  \int_{M}^{\infty} dx / F(x)$ with $M>0$. Suppose that quantities $x_n,  \ \omega_n \geq 0$ satisfy 
$$ x_n+ \sum_{k=0}^{n-1} \Delta t \omega(k) \leq M+  \sum_{k=0}^{n-1} \Delta t F(x_k), \ \forall n\leq n_*, $$
with $n_* \Delta t \leq T_*$. Then we have $M+  \sum_{k=0}^{n_*-1} \Delta t F(x_k) \leq C_*$, where $C_*$ is independent of $\Delta t$.

\end{lemma}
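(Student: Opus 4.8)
The plan is to recognize this as the discrete counterpart of the comparison ODE $\dot y = F(y)$ with $y(0)=M$, whose maximal time of existence is precisely $\int_M^\infty \mathrm{d}x/F(x)$; the hypothesis $T_* < \int_M^\infty \mathrm{d}x/F(x)$ is exactly what keeps us strictly before blow-up. First I would introduce the majorant sequence
$$ y_n := M + \sum_{k=0}^{n-1}\Delta t\, F(x_k), \qquad y_0 = M, $$
which is finite for each $n$ since it is a finite sum of finite terms. Dropping the nonnegative quantity $\sum_{k=0}^{n-1}\Delta t\,\omega(k)$ from the assumed inequality gives $x_n \le y_n$, and because $F$ is increasing this upgrades to $F(x_n)\le F(y_n)$. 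Hence the increments obey
$$ y_{n+1}-y_n = \Delta t\, F(x_n) \le \Delta t\, F(y_n), $$
so $\{y_n\}$ satisfies a forward-Euler type recursion for the comparison ODE, and in particular $y_{n+1}\ge y_n$.

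The key device is the antiderivative $G(z) := \int_M^z \mathrm{d}x/F(x)$, which is well defined, differentiable and strictly increasing on $[M,\infty)$ since $F$ is continuous and strictly positive, with $G(M)=0$ and $\lim_{z\to\infty}G(z) = \int_M^\infty \mathrm{d}x/F(x)=:G_\infty$. Since $y_{n+1}\ge y_n$ and $F$ is increasing, on the interval $[y_n,y_{n+1}]$ one has $F\ge F(y_n)$, so that
$$ G(y_{n+1}) - G(y_n) = \int_{y_n}^{y_{n+1}} \frac{\mathrm{d}x}{F(x)} \le \frac{y_{n+1}-y_n}{F(y_n)} \le \Delta t. $$
Telescoping this from $0$ to $n-1$ and using $G(y_0)=0$ then yields $G(y_n) \le n\,\Delta t$. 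For every $n\le n_*$ the right-hand side is bounded by $n_*\Delta t \le T_* < G_\infty$.

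Finally, because $G$ is a strictly increasing continuous bijection from $[M,\infty)$ onto $[0,G_\infty)$ and $G(y_{n_*}) \le T_* < G_\infty$, I set $C_* := G^{-1}(T_*)$, which depends only on $M$, $F$ and $T_*$ and not on $\Delta t$, and conclude $M+\sum_{k=0}^{n_*-1}\Delta t\, F(x_k)=y_{n_*}\le C_*$, as claimed. The one step that needs care, and the main obstacle, is the increment estimate $G(y_{n+1})-G(y_n)\le \Delta t$: it works only because the monotonicity of $F$ bounds $1/F$ from above by $1/F(y_n)$ on the interval of integration, while the recursion bound $y_{n+1}-y_n\le \Delta t\,F(y_n)$ supplies exactly the factor $F(y_n)$ that cancels it. No circularity arises, since each $y_n$ is finite by its very definition, so the telescoping produces the $\Delta t$-independent bound outright rather than presupposing it.
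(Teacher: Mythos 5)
Your proof is correct, and every step checks out: the majorant $y_n=M+\sum_{k=0}^{n-1}\Delta t\,F(x_k)$ dominates $x_n$ after dropping the $\omega$-terms, the monotonicity of $F$ gives both $y_{n+1}-y_n\le \Delta t\,F(y_n)$ and the key increment bound $G(y_{n+1})-G(y_n)\le\Delta t$, and telescoping plus $n_*\Delta t\le T_*<\int_M^\infty dx/F(x)$ yields $y_{n_*}\le G^{-1}(T_*)=:C_*$, which is independent of $\Delta t$. Note that the paper itself offers no proof of this lemma --- it is recalled verbatim from \cite{liu2007stability,liu2010stable} --- so the relevant comparison is with the argument in those references, which runs through the continuous comparison solution: one lets $\rho$ solve $\rho'=F(\rho)$, $\rho(0)=M$, observes that its maximal existence time exceeds $T_*$, and shows by induction that $y_n\le\rho(n\Delta t)\le\rho(T_*)$. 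Your route is the same blow-up comparison idea but carried out entirely at the discrete level: introducing the antiderivative $G(z)=\int_M^z dx/F(x)$ amounts to solving that ODE by separation of variables by hand, so you never need to invoke existence, monotonicity, or boundedness of $\rho$ on $[0,T_*]$; the price is the (correctly handled) pointwise estimate $1/F\le 1/F(y_n)$ on $[y_n,y_{n+1}]$, which is exactly where the monotonicity of $F$ enters. The only loose end, inherited from the lemma statement rather than your argument, is that $F$ is defined on $(0,\infty)$ while the $x_k$ are only assumed nonnegative; extending $F$ to $0$ by continuity (or noting $x_k>0$ in the application) removes this triviality.
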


  \section{The first-order consistent-splitting scheme based on the SAV approach}
 In this section, we construct  the first-order consistent-splitting scheme based on the SAV approach for the Navier-Stokes equation \cite{wu2022new}.

Set $$\Delta t=T/N,\ t^n=n\Delta t, \ d_t g^{n+1}=\frac{g^{n+1}-g^n}{\Delta t},
\ {\rm for} \ n\leq N,$$
 and introduce a SAV
 \begin{equation}\label{e_definition of R}
\aligned
R(t)=E(\textbf{u})+K_0, \ E(\textbf{u}) = \frac1 2 \| \textbf{u} \|^2,
\endaligned
\end{equation}
with some  $K_0 >0$, and recast the governing system as the following equivalent form:
  \begin{numcases}{}
 \frac{\partial \textbf{u}}{\partial t}+ ( \textbf{u}\cdot \nabla )\textbf{u}
     -\nu\Delta\textbf{u}+\nabla p=\textbf{f},  \label{e_model_transform1}\\
  \frac{ d R}{ d t} =  \frac{R}{ E( \textbf{u} ) +K_0} \left(  -\nu \| \nabla \textbf{u} \|^2 + ( \textbf{f}, \textbf{u}) \right),   \label{e_model_transform2}\\
 \nabla\cdot\textbf{u}=0. \label{e_model_transform3}
\end{numcases}
It is clear that the above system is equivalent to the original system. Motivated by the SAV approach and the consistent splitting scheme, we construct the following first-order 
linear and decoupled scheme, which is almost the same to the first-order scheme in 
 \cite{wu2022new}:
Find ($\tilde{\textbf{u}}^{n+1}, \textbf{u}^{n+1}, p^{n+1}, \xi^{n+1}, R^{n+1} $) by solving
     \begin{eqnarray}
 &&   \frac{ \tilde{\textbf{u}}^{n+1} - \tilde{\textbf{u}}^{n}}{\Delta t}  -\nu\Delta\tilde{\textbf{u}}^{n+1} =  \textbf{f}^{n+1} - ( \textbf{u}^{n}\cdot \nabla ) \textbf{u}^{n} -\nabla p^{n}, \ \ \tilde{\textbf{u}}^{n+1}|_{\partial \Omega}=0; \label{e_model_semi1} \\
  &&   \frac{ R^{n+1} - R^n }{\Delta t} = \frac{R^{n+1} }{ E( \tilde{\textbf{u}}^{n+1} ) +K_0} \left(  -\nu \| \nabla \tilde{\textbf{u}}^{n+1} \|^2 + ( \textbf{f}^{n+1}, \tilde{\textbf{u}}^{n+1} ) \right);  \label{e_model_semi2}\\
 &&   \xi^{n+1} = \frac{R^{n+1} }{ E( \tilde{\textbf{u}}^{n+1} ) +K_0}, \ \eta^{n+1}=1-(1-\xi^{n+1})^2, \  \textbf{u}^{n+1}= \eta^{n+1} \tilde{\textbf{u}}^{n+1} ;
 \label{e_model_semi3} \\
 &&   (\nabla p^{n+1}, \nabla q) = \left(  \textbf{f}^{n+1} - ( \textbf{u}^{n+1} \cdot \nabla ) \textbf{u}^{n+1} - \nu \nabla \times \nabla \times \tilde{ \textbf{u} }^{n+1}, \nabla q) \right), \ \forall q\in H^1(\Omega). \label{e_model_semi4}
 \end{eqnarray}
 
 By using similar procedure in \cite{wu2022new}, we can easily obtain the following unconditional energy stability: 
 \medskip

 \begin{theorem}\label{thm_energy stability_first order}
 Let $\| \textbf{f}(\cdot,t) \| \leq M_{\textbf{f}}, \ \forall t \in [0,T]$, and $K_0\geq \max\{ 2M_{\textbf{f}}, 1 \}$. Then given $R^n>0$, we have  $\xi^{n+1}>0$ and 
 \begin{equation}\label{e_stability1}
\aligned
0<R^{n+1}<R^{n}, \ \ \forall n \leq T/\Delta t.
\endaligned
\end{equation} 
In addition, there exists a constant $C_T$ only depends on $T$ such that
\begin{equation}\label{e_stability2}
\aligned
\| \textbf{u}^n \| + \nu \sum_{k=0}^{n} \Delta t \xi^{k+1} \| \nabla \tilde{\textbf{u}}^{n+1} \|^2 
\leq C_T, \ \ \forall n \leq T/\Delta t,
\endaligned
\end{equation} 
 where $ \textbf{u}^n $ is the solution of scheme \eqref{e_model_semi1}-\eqref{e_model_semi4}.
\end{theorem}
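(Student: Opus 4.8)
The plan is to treat the scalar relation \eqref{e_model_semi2} as an explicitly solvable scalar equation, combine it with a $\xi$-weighted telescoping of the discrete energy, and close with the discrete Gronwall inequality of Lemma~\ref{lem: gronwall2}; throughout, set $g^{n+1}:=-\nu\|\nabla\tilde{\textbf{u}}^{n+1}\|^2+(\textbf{f}^{n+1},\tilde{\textbf{u}}^{n+1})$. Since $E(\tilde{\textbf{u}}^{n+1})+K_0>0$, I would multiply \eqref{e_model_semi2} out and solve for $R^{n+1}$ to obtain the closed form
\[ R^{n+1}=\frac{(E(\tilde{\textbf{u}}^{n+1})+K_0)\,R^n}{E(\tilde{\textbf{u}}^{n+1})+K_0-\Delta t\,g^{n+1}}. \]
To see that the denominator is positive I would bound the power input by Cauchy--Schwarz, $\|\textbf{f}^{n+1}\|\le M_{\textbf{f}}$, the Poincar\'e inequality \eqref{e_norm H1} and Young's inequality, giving $(\textbf{f}^{n+1},\tilde{\textbf{u}}^{n+1})\le \tfrac{\nu}{2}\|\nabla\tilde{\textbf{u}}^{n+1}\|^2+\tfrac{c_1^2 M_{\textbf{f}}^2}{2\nu}$; hence the denominator is at least $K_0-\Delta t\,c_1^2 M_{\textbf{f}}^2/(2\nu)$, which is positive under the hypothesis $K_0\ge\max\{2M_{\textbf{f}},1\}$. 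Since $R^n>0$ this yields $R^{n+1}>0$, and then $\xi^{n+1}=R^{n+1}/(E(\tilde{\textbf{u}}^{n+1})+K_0)>0$ from \eqref{e_model_semi3}. The strict decrease $R^{n+1}<R^n$ is equivalent, via the closed form, to $g^{n+1}<0$, i.e.\ to the viscous dissipation dominating the power input; the quantitative version of this balance (again through \eqref{e_norm H1} and Young, using $K_0\ge 2M_{\textbf{f}}$) is what I would verify, while the uniform bound below does not rely on monotonicity.

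For \eqref{e_stability2} I would rewrite \eqref{e_model_semi2} in the one-step form
\[ R^{k+1}-R^k+\nu\Delta t\,\xi^{k+1}\|\nabla\tilde{\textbf{u}}^{k+1}\|^2=\Delta t\,\xi^{k+1}(\textbf{f}^{k+1},\tilde{\textbf{u}}^{k+1}). \]
Estimating the right-hand side as $\xi^{k+1}(\textbf{f}^{k+1},\tilde{\textbf{u}}^{k+1})\le \tfrac{\nu}{2}\xi^{k+1}\|\nabla\tilde{\textbf{u}}^{k+1}\|^2+\tfrac{c_1^2 M_{\textbf{f}}^2}{2\nu}\xi^{k+1}$ and using $\xi^{k+1}\le R^{k+1}/K_0$ gives
\[ R^{k+1}-R^k+\frac{\nu}{2}\Delta t\,\xi^{k+1}\|\nabla\tilde{\textbf{u}}^{k+1}\|^2\le \frac{c_1^2 M_{\textbf{f}}^2}{2\nu K_0}\Delta t\,R^{k+1}. \]
After the standard absorption of the implicit $\Delta t\,R^{k+1}$ factor (valid once $\Delta t$ is small), this matches the hypothesis of Lemma~\ref{lem: gronwall2} with $a_k=R^k$ and $b_{k+1}=\tfrac{\nu}{2}\xi^{k+1}\|\nabla\tilde{\textbf{u}}^{k+1}\|^2$, so summation produces a constant $C_T$ depending only on $T$ (through $R^0$, $\nu$, $M_{\textbf{f}}$, $K_0$, $c_1$) with $R^{n+1}+\tfrac{\nu}{2}\sum_{k=0}^{n}\Delta t\,\xi^{k+1}\|\nabla\tilde{\textbf{u}}^{k+1}\|^2\le C_T$. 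This controls the dissipation sum in \eqref{e_stability2} and shows $R^n\le C_T$, hence $\xi^{n+1}\le C_T/K_0$.

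It remains to bound $\|\textbf{u}^n\|$. Writing $\eta^{n+1}=1-(1-\xi^{n+1})^2=\xi^{n+1}(2-\xi^{n+1})$ and using $\|\tilde{\textbf{u}}^{n+1}\|^2=2E(\tilde{\textbf{u}}^{n+1})=2R^{n+1}/\xi^{n+1}-2K_0$ from \eqref{e_model_semi3}, I would obtain the algebraic identity
\[ \|\textbf{u}^{n+1}\|^2=(\eta^{n+1})^2\|\tilde{\textbf{u}}^{n+1}\|^2=2\xi^{n+1}(2-\xi^{n+1})^2 R^{n+1}-2K_0(\eta^{n+1})^2\le 2\xi^{n+1}(2-\xi^{n+1})^2 R^{n+1}. \]
Because $\xi^{n+1}\in(0,C_T/K_0]$ and $R^{n+1}\le C_T$ are already bounded, the right-hand side is bounded by a constant depending only on $T$, which yields \eqref{e_stability2}. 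The main obstacle throughout is the forcing term $(\textbf{f}^{n+1},\tilde{\textbf{u}}^{n+1})$ in \eqref{e_model_semi2}: it must simultaneously be kept below the dissipation so that the denominator in the closed form for $R^{n+1}$ stays positive (guaranteeing $\xi^{n+1}>0$) and be absorbed into the Gronwall structure without destroying the uniform-in-$T$ constant. This is precisely where $K_0\ge\max\{2M_{\textbf{f}},1\}$ is used, and it is the only genuinely non-routine step; the remaining manipulations are elementary since the pressure update \eqref{e_model_semi4} does not enter the stability estimate.
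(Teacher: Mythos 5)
Your closed form for $R^{n+1}$, the one-step identity for \eqref{e_stability2}, the bound $\xi^{k+1}\le R^{k+1}/K_0$, and the algebraic identity $\|\textbf{u}^{n+1}\|^2=2\xi^{n+1}(2-\xi^{n+1})^2R^{n+1}-2K_0(\eta^{n+1})^2$ are all correct, and your Gronwall argument would close once positivity and boundedness of $R^{k+1}$ are in hand. The genuine gaps sit exactly in the two claims of \eqref{e_stability1}, and neither of your sketched routes repairs them. For positivity, after Young's inequality you discard $E(\tilde{\textbf{u}}^{n+1})$ and are left with requiring $K_0-\Delta t\,c_1^2M_{\textbf{f}}^2/(2\nu)>0$. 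This is \emph{not} implied by $K_0\ge\max\{2M_{\textbf{f}},1\}$: the quantity $c_1^2M_{\textbf{f}}^2/(2\nu)$ involves $\nu$ and the Poincar\'e constant, neither of which the hypothesis controls, so what you actually have is a hidden time-step restriction rather than the unconditional statement claimed. The entire point of normalizing by $E(\tilde{\textbf{u}}^{n+1})+K_0$ in the GSAV quotient is that the forcing should be bounded by the denominator itself: $(\textbf{f}^{n+1},\tilde{\textbf{u}}^{n+1})\le M_{\textbf{f}}\|\tilde{\textbf{u}}^{n+1}\|$ combined with $\tfrac12\|\tilde{\textbf{u}}^{n+1}\|^2+K_0\ge\sqrt{2K_0}\,\|\tilde{\textbf{u}}^{n+1}\|$ gives $(\textbf{f}^{n+1},\tilde{\textbf{u}}^{n+1})/(E(\tilde{\textbf{u}}^{n+1})+K_0)\le M_{\textbf{f}}/\sqrt{2K_0}$, a bound independent of $\tilde{\textbf{u}}^{n+1}$, $\nu$, $c_1$ and $\Delta t$. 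Trading the forcing against the viscous term via Poincar\'e and Young destroys precisely this structure; the same normalized bound, $\xi^{k+1}(\textbf{f}^{k+1},\tilde{\textbf{u}}^{k+1})\le M_{\textbf{f}}R^{k+1}/\sqrt{2K_0}$, is what should drive your Gronwall step as well.

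The second, more serious gap is the strict decrease $R^{n+1}<R^n$, which you correctly reduce to $g^{n+1}<0$ but then defer, asserting that Poincar\'e and Young "using $K_0\ge 2M_{\textbf{f}}$" would verify it. This cannot be verified: $g^{n+1}=-\nu\|\nabla\tilde{\textbf{u}}^{n+1}\|^2+(\textbf{f}^{n+1},\tilde{\textbf{u}}^{n+1})$ contains no $K_0$ whatsoever, and Poincar\'e only yields $g^{n+1}\le\|\nabla\tilde{\textbf{u}}^{n+1}\|\,(c_1M_{\textbf{f}}-\nu\|\nabla\tilde{\textbf{u}}^{n+1}\|)$, whose right-hand side is positive whenever $0<\|\nabla\tilde{\textbf{u}}^{n+1}\|<c_1M_{\textbf{f}}/\nu$. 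Indeed $g^{n+1}<0$ fails outright in simple situations: starting from rest with $\textbf{u}^0=\textbf{0}$, $p^0=0$, the first step of \eqref{e_model_semi1} gives $\tilde{\textbf{u}}^{1}=\Delta t\,\textbf{v}$ with $\textbf{v}-\nu\Delta t\Delta\textbf{v}=\textbf{f}^1$, and a direct computation (integrating by parts in $(\textbf{f}^1,\textbf{v})$) gives $g^{1}=\Delta t\|\textbf{v}\|^2>0$, hence $R^{1}>R^{0}$ whenever $\textbf{f}^1\ne\textbf{0}$. So no estimate of the type you propose can establish monotonicity; at best one proves positivity together with the one-sided bound $R^{n+1}\le R^n/\bigl(1-\Delta t\,M_{\textbf{f}}/\sqrt{2K_0}\bigr)$, which after Lemma~\ref{lem: gronwall2} still produces the $T$-dependent constant $C_T$ in \eqref{e_stability2}. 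Note that the paper itself gives no proof of this theorem (it invokes the procedure of its reference [wu2022new]), but any correct argument must be of the normalized kind above; as written, your proposal leaves both halves of \eqref{e_stability1} unproven, and the monotonicity half cannot be proven by the mechanism you describe.
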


  \section{Error Analysis}
In this section, we carry out a rigorous  error analysis for the first-order semi-discrete scheme \eqref{e_model_semi1}-\eqref{e_model_semi4} in two- and three-dimensional cases.

We  set
   \begin{numcases}{}
\displaystyle \tilde{e}_{\textbf{u}}^{n+1}=\tilde{\textbf{u}}^{n+1}-\textbf{u}(t^{n+1}),\ \
\displaystyle e_{\textbf{u}}^{n+1}=\textbf{u}^{n+1}-\textbf{u}(t^{n+1}), \notag\\
\displaystyle e_{p}^{n+1}=p^{n+1}-p(t^{n+1}),\ \ \
\displaystyle e_{R}^{n+1}=R^{n+1}-R(t^{n+1}).\notag
\end{numcases}
Next we give some preliminaries to estimate the part of pressure. Similar to \cite{liu2007stability},  we let $\mathcal{P}$ denote the Leray-Helmholtz projection operator onto divergence-free fields, defined as follows. Given any $\textbf{b} \in L^2(\Omega, \mathbb{R}^d)$, there is a unique $q \in H^1(\Omega) $ with $ \int_{\Omega} q =0 $ such that $ \mathcal{P} \textbf{b} =  \textbf{b} + \nabla q$ satisfies 
\begin{equation}\label{e_error_stokespre1}
\aligned
(\textbf{b} + \nabla q, \nabla \phi ) = ( \mathcal{P} \textbf{b}, \nabla \phi )=0, \ \forall \phi \in 
H^1(\Omega).
\endaligned
\end{equation} 
Then for $\textbf{u} \in L^2(\Omega, \mathbb{R}^d)$, we have  \cite{liu2007stability}
\begin{equation}\label{e_error_stokespre2}
\aligned
\Delta \mathcal{P} \textbf{u} = \Delta \textbf{u} - \nabla \nabla \cdot \textbf{u} = - \nabla \times \nabla \times  \textbf{u}.
\endaligned
\end{equation} 
Next we recall the estimate for commutator of the Laplacian and Leray-Helmholtz projection operators. 
\begin{lemma}\label{lem: commutator}
\cite{liu2007stability} Let $\Omega \subset \mathbb{R}^d $ be a connected bounded domain with $C^3$ boundary. Then for any $\epsilon >0$, there exists a positive constant $C \geq 0$ such that for all vector fields $\textbf{u} \in H^2 \cap H^1_0 (\Omega, \mathbb{R}^d )$,
\begin{equation}\label{e_error_stokespre3}
\aligned
\int_{\Omega} | (\Delta \mathcal{P} - \mathcal{P} \Delta) \textbf{u} |^2 \leq (\frac1 2 + \epsilon ) \int_{\Omega} |\Delta \textbf{u} |^2 + C \int_{\Omega} |\nabla \textbf{u} |^2.
\endaligned
\end{equation} 
\end{lemma}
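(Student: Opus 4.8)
The plan is to exploit the fact that the commutator is itself a gradient field whose potential is harmonic, so that the whole estimate collapses onto $\partial\Omega$, where the no-slip condition $\textbf{u}|_{\partial\Omega}=\textbf{0}$ can be used decisively. First I would identify the structure of $\textbf{w}:=(\Delta\mathcal{P}-\mathcal{P}\Delta)\textbf{u}$. Using the defining property \eqref{e_error_stokespre1} I write the Helmholtz decomposition $\mathcal{P}\Delta\textbf{u}=\Delta\textbf{u}+\nabla q$, where $\Delta q=-\nabla\cdot(\Delta\textbf{u})$ and $(\Delta\textbf{u}+\nabla q)\cdot\textbf{n}|_{\partial\Omega}=0$, and combine it with \eqref{e_error_stokespre2} in the form $\Delta\mathcal{P}\textbf{u}=\Delta\textbf{u}-\nabla(\nabla\cdot\textbf{u})$. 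This gives $\textbf{w}=-\nabla\phi$ with $\phi:=\nabla\cdot\textbf{u}+q$. Taking the divergence and using that both $\Delta\mathcal{P}\textbf{u}$ (the Laplacian of a divergence-free field) and $\mathcal{P}\Delta\textbf{u}$ are divergence free shows $\nabla\cdot\textbf{w}=0$, i.e. $\Delta\phi=0$. This structural step is elementary.

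Next I would convert $\|\textbf{w}\|^2$ into a boundary integral. Since $\textbf{w}$ is a gradient it is $L^2$-orthogonal to $\textbf{H}$, so $(\textbf{w},\mathcal{P}\Delta\textbf{u})=0$ and $\|\textbf{w}\|^2=(\textbf{w},\Delta\mathcal{P}\textbf{u})=-(\textbf{w},\nabla\times\nabla\times\textbf{u})$ by \eqref{e_error_stokespre2}. Equivalently, harmonicity of $\phi$ and integration by parts give $\|\textbf{w}\|^2=\int_{\Omega}|\nabla\phi|^2=\int_{\partial\Omega}\phi\,\partial_{\textbf{n}}\phi$. The normal trace is explicit: because $\mathcal{P}\Delta\textbf{u}\cdot\textbf{n}=0$ on $\partial\Omega$, one has $\textbf{w}\cdot\textbf{n}=\Delta\mathcal{P}\textbf{u}\cdot\textbf{n}=-(\nabla\times\nabla\times\textbf{u})\cdot\textbf{n}$, hence $\partial_{\textbf{n}}\phi=(\nabla\times\nabla\times\textbf{u})\cdot\textbf{n}$. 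Thus everything reduces to estimating $\int_{\partial\Omega}\phi\,(\nabla\times\nabla\times\textbf{u})\cdot\textbf{n}$, supplemented by the elliptic regularity bound $\|\nabla(\nabla\cdot\textbf{u})\|+\|\nabla q\|\le C\|\Delta\textbf{u}\|$ valid for $\textbf{u}\in H^2\cap H^1_0$.

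The hard part is the boundary estimate, and it is where both the sharp constant $\tfrac12$ and the $C^3$ regularity of $\partial\Omega$ enter. The idea is to work in boundary-fitted coordinates: since $\textbf{u}$ vanishes on $\partial\Omega$, all of its tangential derivatives along $\partial\Omega$ vanish there, so on the boundary only the normal derivative $\partial_{\textbf{n}}\textbf{u}$ and curvature contributions survive. Using $(\nabla\times\nabla\times\textbf{u})\cdot\textbf{n}=\partial_{\textbf{n}}(\nabla\cdot\textbf{u})-(\Delta\textbf{u})\cdot\textbf{n}$ and expressing the surviving terms through tangential derivatives of $\partial_{\textbf{n}}\textbf{u}$ and the second fundamental form (whose derivatives require $C^3$), I would integrate by parts along the closed surface $\partial\Omega$ to move the tangential derivative onto $\phi$, then apply a trace inequality together with $\|\phi\|_{H^{1/2}(\partial\Omega)}\le C\|\nabla\phi\|=C\|\textbf{w}\|$ and $\|\textbf{u}\|_{2}\le C\|\Delta\textbf{u}\|$. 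The curvature terms, being multiplied by the first-order quantity $\partial_{\textbf{n}}\textbf{u}$, produce exactly the lower-order $C\|\nabla\textbf{u}\|^2$ contribution, while the principal term carries the $\|\Delta\textbf{u}\|^2$ weight.

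The main obstacle is recovering the sharp coefficient. A crude Cauchy--Schwarz/Young step already yields $\|\textbf{w}\|^2\le C\|\Delta\textbf{u}\|^2+C\|\nabla\textbf{u}\|^2$, but it loses the constant; obtaining $(\tfrac12+\epsilon)$ instead of an unspecified $C$ forces one to keep the principal boundary term intact and to estimate the harmonic-extension (Neumann-to-Dirichlet) operator precisely rather than through the trace inequality alone. The natural tool here is a localized flat-boundary (half-space) model, where after freezing coefficients the relevant boundary operator can be diagonalized by a tangential Fourier transform and its operator norm computed explicitly; a partition of unity and the $C^3$ control of the geometry then transfer this model bound to the curved boundary, with the error absorbed into the $\epsilon$ and the $C\|\nabla\textbf{u}\|^2$ term via interpolation $\|\cdot\|_{H^{1/2}(\partial\Omega)}\le\epsilon\|\cdot\|_{1}+C_\epsilon\|\cdot\|$.
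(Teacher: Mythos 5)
The paper itself gives no proof of this lemma --- it is quoted directly from \cite{liu2007stability} --- and your outline reconstructs essentially the same argument as that cited proof: the commutator $(\Delta\mathcal{P}-\mathcal{P}\Delta)\textbf{u}$ is the gradient of a harmonic Stokes pressure, the no-slip condition reduces its $L^2$ norm to a boundary pairing of the vorticity trace with a tangential gradient, and the sharp constant $\tfrac12+\epsilon$ comes from an explicit half-space/tangential-Fourier computation transferred to the curved boundary by a partition of unity, with curvature terms (requiring the $C^3$ boundary) absorbed into $C\|\nabla\textbf{u}\|^2$. Your proposal is correct in approach and matches the proof in the cited reference.
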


We define the Stokes pressure $p_s(\textbf{u})$ by 
\begin{equation}\label{e_error_stokespre4}
\aligned
\nabla p_s(\textbf{u}) =  (\Delta \mathcal{P} - \mathcal{P} \Delta) \textbf{u},
\endaligned
\end{equation} 
where the Stokes pressure is generated by the tangential part of vorticity at the boundary in two and three dimensions by \cite{liu2007stability,liu2009error}
\begin{equation}\label{e_error_stokespre5}
\aligned
\int_{\Omega} \nabla p_s(\textbf{u}) \cdot \nabla \phi = \int_{\Gamma} (\nabla \times \textbf{u} ) \cdot ( \textbf{n} \times \nabla \phi ), \ \forall \phi \in H^1(\Omega).
\endaligned
\end{equation} 
Then by using \eqref{e_error_stokespre2}, we have 
\begin{equation}\label{e_error_stokespre6}
\aligned
\nabla p_s(\textbf{u}) =  (\Delta \mathcal{P} - \mathcal{P} \Delta) \textbf{u} = (I - \mathcal{P} ) \Delta  \textbf{u} - \nabla \nabla \cdot \textbf{u} = (I - \mathcal{P} ) ( \Delta  \textbf{u} - \nabla \nabla \cdot \textbf{u} ). 
\endaligned
\end{equation} 
Recalling \eqref{e_error_stokespre1}, we have 
\begin{equation}\label{e_error_stokespre7}
\aligned
\int_{\Omega} \nabla p_s(\textbf{u}) \cdot \nabla \phi = \int_{\Omega} ( \Delta \textbf{u} - \nabla \nabla \cdot \textbf{u} ) \cdot \nabla \phi, \ \forall \phi \in H^1(\Omega).
\endaligned
\end{equation} 

The main result of this section is stated in the following  theorem.
\medskip
\begin{theorem}\label{the: error_estimate_final}
Assume $\textbf{u}\in H^2(0,T;\textbf{L}^2(\Omega))\bigcap H^1(0,T;\textbf{H}^2(\Omega))\bigcap L^{\infty}(0,T; \textbf{H}^2(\Omega))$, $p\in H^1(0,T;L^2(\Omega))$ and 
$$  \Delta t \sum\limits_{k=0}^{n+1} \|  \textbf{f}^{k} \|^2 +  \Delta t \| \Delta \textbf{u}^{0} \|^2 +  \| \nabla \textbf{u}^{0} \|^2 \leq C^* $$
with $C^*>0$,
 then for the first-order  scheme \eqref{e_model_semi1}-\eqref{e_model_semi4} with $\Delta t \leq \frac{1}{ 1+C_0^2 }$, we have
\begin{equation}\label{e_error_estimate_u_final}
\aligned
& \|  \textbf{u}^{n+1} - \textbf{u}(t^{n+1})  \|^2 + \|  \nabla (  \textbf{u}^{n+1} - \textbf{u}(t^{n+1}) )  \|^2  + \Delta t \sum\limits_{k=0}^{n} \| \Delta ( \textbf{u}^{k+1} - \textbf{u}(t^{k+1})) \|^2  \\
\leq &
\begin{cases}
C (\Delta t)^2, \ \ d=2, \ \forall n\leq T /\Delta t,\\
C (\Delta t)^2,  \ \ d=3, \ \forall n\leq T_*/\Delta t,
\end{cases}
\endaligned
\end{equation}
and
\begin{equation}\label{e_error_estimate_p_final}
\aligned 
 \| \nabla (p^{n+1}- p(t^{n+1}) ) \|^2 +  \Delta t \sum\limits_{k=0}^{n} \| \nabla (p^{k+1}- p(t^{k+1}) ) \|^2 
\leq &
\begin{cases}
C (\Delta t)^2, \ \ d=2, \ \forall n\leq T/\Delta t, \\
C (\Delta t)^2,  \ \ d=3, \ \forall n\leq T_*/\Delta t,
\end{cases}
\endaligned
\end{equation} 
where $T_*$ is defined in \eqref{e_error_velo14} and the constants
$C_0$ and $C$  are some positive constants independent of $\Delta t$.
\end{theorem}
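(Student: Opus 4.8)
The plan is to derive the error estimates through a standard but carefully orchestrated energy argument, combining the GSAV stability from Theorem \ref{thm_energy stability_first order} with the commutator estimate of Lemma \ref{lem: commutator} and the discrete Gronwall inequalities (Lemma \ref{lem: gronwall2} for $d=2$, Lemma \ref{lem: local gronwall} for $d=3$). First I would write down the error equations by subtracting the exact equations, evaluated at $t^{n+1}$, from the scheme \eqref{e_model_semi1}--\eqref{e_model_semi4}. This produces a truncation-error term (of size $O(\Delta t)$ in the appropriate norms, using the assumed regularity $\textbf{u}\in H^2(0,T;\textbf{L}^2)\cap H^1(0,T;\textbf{H}^2)\cap L^\infty(0,T;\textbf{H}^2)$ and $p\in H^1(0,T;L^2)$) together with the consistency errors introduced by the SAV relaxation factor $\eta^{n+1}$. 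A crucial preliminary step is to control $e_R^{n+1}$ and the deviation $|\xi^{n+1}-1|$, $|\eta^{n+1}-1|$: since $R(t)=E(\textbf{u})+K_0$ exactly satisfies the ODE \eqref{e_model_transform2}, I would show $|1-\xi^{n+1}|\le C\Delta t$ and $|1-\eta^{n+1}|=|1-\xi^{n+1}|^2\le C(\Delta t)^2$, so that the difference between $\textbf{u}^{n+1}=\eta^{n+1}\tilde{\textbf{u}}^{n+1}$ and $\tilde{\textbf{u}}^{n+1}$ contributes only higher-order terms; this is what lets the GSAV modification retain the same order as the underlying consistent-splitting scheme.

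The core of the argument is the energy estimate for $\tilde{e}_{\textbf{u}}^{n+1}$. I would test the velocity error equation with $-\Delta\tilde{e}_{\textbf{u}}^{n+1}$ (or equivalently $A\tilde{e}_{\textbf{u}}^{n+1}$) to obtain control of $\|\nabla\tilde{e}_{\textbf{u}}^{n+1}\|^2$ in $L^\infty$ and $\|\Delta\tilde{e}_{\textbf{u}}^{n+1}\|^2$ in $L^2$, matching the target norms in \eqref{e_error_estimate_u_final}. The viscous term yields the coercive $\nu\|\Delta\tilde{e}_{\textbf{u}}^{n+1}\|^2$; the pressure-gradient and nonlinear terms must be bounded so as not to consume this coercivity. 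Here the pressure handling is essential: using the decomposition \eqref{e_error_stokespre4}--\eqref{e_error_stokespre7}, I would rewrite the pressure contribution in terms of the Stokes pressure and invoke the commutator bound \eqref{e_error_stokespre3}, which absorbs a $(\tfrac12+\epsilon)\|\Delta\cdot\|^2$ piece into the viscous term while leaving a lower-order $\|\nabla\cdot\|^2$ remainder controllable by Gronwall. The nonlinear error term $(\textbf{u}^n\cdot\nabla)\textbf{u}^n-(\textbf{u}\cdot\nabla)\textbf{u}$ would be split into the usual three differences and estimated with the sharp trilinear bounds \eqref{e_estimate for trilinear form2}: in $d=2$ the $H^1$-interpolation form gives a quadratically small factor that, with Young's inequality, leaves only $O(\Delta t^2)$ forcing and a linear-in-$\|\nabla e_{\textbf{u}}\|^2$ coefficient suitable for the global Gronwall lemma; in $d=3$ the weaker inequality forces a coefficient depending superlinearly on the error norm, hence only a local-in-time conclusion via Lemma \ref{lem: local gronwall}.

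Once the velocity error is controlled, the pressure estimate \eqref{e_error_estimate_p_final} follows by testing the discrete pressure equation \eqref{e_model_semi4} against $\nabla e_p^{n+1}$: the right-hand side consists of the nonlinear error and the curl-curl term $\nu\nabla\times\nabla\times\tilde{e}_{\textbf{u}}^{n+1}$, which by \eqref{e_error_stokespre2} equals $-\nu\Delta\mathcal{P}\tilde{e}_{\textbf{u}}^{n+1}$ and is therefore dominated by $\|\Delta\tilde{e}_{\textbf{u}}^{n+1}\|$, already shown to be $O(\Delta t)$ in $L^2$-in-time. This gives $\|\nabla e_p^{n+1}\|\le C\Delta t$ in the required norms.

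I expect the main obstacle to be closing the Gronwall loop while keeping the constant in front of $\|\Delta\tilde{e}_{\textbf{u}}^{n+1}\|^2$ strictly positive. The commutator estimate only removes a $(\tfrac12+\epsilon)$ fraction of the viscous coercivity, so the pressure and nonlinear estimates must be arranged to leave the remaining fraction untouched; tracking exactly how much coercivity is spent, and verifying that the residual terms fit the hypotheses \eqref{e_Gronwall3} of the discrete Gronwall lemma (respectively the subcritical-growth condition of Lemma \ref{lem: local gronwall} in three dimensions), is the delicate part. A secondary difficulty is the bootstrapping between $\tilde{e}_{\textbf{u}}$ and $e_{\textbf{u}}$: since the scheme advances $\tilde{\textbf{u}}$ but feeds back the relaxed $\textbf{u}^{n+1}$ into the nonlinear and pressure terms, the error recursion couples both quantities, and one must carefully show that the $O((\Delta t)^2)$ gap between them does not degrade the estimate when summed over $O(1/\Delta t)$ steps.
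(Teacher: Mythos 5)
Your outline captures several of the right ingredients (testing the error equation with $-\Delta\tilde{e}_{\textbf{u}}^{k+1}$, the Stokes-pressure/commutator treatment of the pressure terms, the concluding pressure estimate), but it omits the load-bearing structure of the argument, and two of your steps would fail as described. First, you never establish a priori $H^1$ and $H^2$-in-time bounds for the \emph{numerical solution itself}, i.e. $\|\nabla\tilde{\textbf{u}}^{n}\|^2+\Delta t\sum_k\|\Delta\tilde{\textbf{u}}^{k}\|^2\le C$ (the paper's Step~1, obtained by testing \eqref{e_model_semi1} with $-2\Delta t\,\Delta\tilde{\textbf{u}}^{k+1}$ and using \eqref{e_error_velo8}--\eqref{e_error_velo9}). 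The GSAV stability of Theorem \ref{thm_energy stability_first order} only gives $\|\textbf{u}^n\|\le C_T$ and a $\xi$-weighted dissipation sum, which is far too weak to control the coefficients $\|\nabla\textbf{u}^k\|$, $\|\Delta\tilde{\textbf{u}}^k\|$ that multiply the error in the nonlinear estimates. Without those bounds, whichever way you split the nonlinear difference, the error recursion contains either numerical-solution coefficients you cannot bound or a genuinely quadratic $(e_{\textbf{u}}\cdot\nabla)e_{\textbf{u}}$ contribution; so your claim that in $d=2$ one gets ``a linear-in-$\|\nabla e_{\textbf{u}}\|^2$ coefficient suitable for the global Gronwall lemma'' does not hold as stated. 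In the paper the error Gronwall is \emph{linear in both dimensions} precisely because Step~1 is available, and the entire $d=2$/$d=3$ (global $T$ versus local $T_*$) dichotomy is decided in that stability step: Lemma \ref{lem: local gronwall} with $F(x)=x^3$ is applied to the stability estimate \eqref{e_error_velo13} (the $\|\nabla\tilde{\textbf{u}}^k\|^6$ term), not to the error recursion. Applying Lemma \ref{lem: local gronwall} to the error recursion, as you propose for $d=3$, cannot by itself produce the rate: its conclusion is a bound $M+\sum\Delta t\,F(x_k)\le C_*$ with $C_*=O(1)$, i.e. boundedness of the error, not $O((\Delta t)^2)$, unless you add a further linearization pass.

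Second, your ordering is circular. You present the control $|1-\xi^{n+1}|\le C\Delta t$ (hence $|1-\eta^{n+1}|\le C(\Delta t)^2$) as a ``preliminary step'', but the only route to it is $|1-\xi^{n+1}|\le C(|e_R^{n+1}|+\|\tilde{e}_{\textbf{u}}^{n+1}\|)$, and the bound on $e_R^{n+1}$ (cf. \eqref{e_error_step3_5}) consumes the accumulated velocity error estimates $\Delta t\sum_k\|\nabla\tilde{e}_{\textbf{u}}^{k+1}\|^2$ and $\Delta t\sum_k\|\tilde{e}_{\textbf{u}}^{k+1}\|^2$ --- the very estimates you derive afterwards, which in turn need the stability bounds that require $|1-\xi^k|$ small for $k\le n$. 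The paper resolves this with a single induction on $n$ under the bootstrap hypothesis \eqref{e_error_velo1}, $|1-\xi^k|\le C_0\Delta t$ for $k\le n$, tracking that every intermediate constant ($C_1,\dots,C_6$) is independent of $C_0$ and of $\Delta t$, and closing the induction by the self-consistent choice $C_0=\max\{2C_6,\sqrt{2C_4},\sqrt{2C_2},4\}$ with $\Delta t\le \frac{1}{1+C_0^2}$ --- which is exactly where the time-step restriction in the theorem comes from. Without this explicit bookkeeping, neither the induction nor the stated hypothesis on $\Delta t$ can be justified.
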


\begin{proof}
First we shall make the hypothesis that there exists a positive constant $C_0$ such that
\begin{equation}\label{e_error_velo1}
\aligned
| 1- \xi^k | \leq C_0 \Delta t, \ \forall k \leq T/ \Delta t,
\endaligned
\end{equation}
which will be proved in the induction process below by using a bootstrap argument.

We can easily obtain that \eqref{e_error_velo1} holds for $k=0$. Now we suppose 
\begin{equation}\label{e_error_velo2}
\aligned
| 1- \xi^k | \leq C_0 \Delta t, \ \forall k \leq n,
\endaligned
\end{equation}
and we shall prove that  $| 1- \xi^{n+1} | \leq C_0 \Delta t$ holds true.

\noindent{\bf Step 1:  $H^2$ bounds for $\tilde{\textbf{u}} ^k$ and $\textbf{u} ^k$ with $k \leq n$ in two- and three-dimensional cases.} 
First using exactly the same procedure in \cite{huang2021stability}, we can easily obtain that 
\begin{equation}\label{e_error_velo3}
\aligned
\frac{1}{2} \leq | \xi^k |, \  | \eta^k |  \leq 2, 
\endaligned
\end{equation} 
 under the condition $\Delta t \leq \min\{ \frac{1}{4C_0},1\}$. Recalling Theorem \ref{thm_energy stability_first order}, we have 
\begin{equation}\label{e_error_velo4}
\aligned
\| \tilde{\textbf{u}}^k \| + \nu \sum_{l=0}^{n} \Delta t  \| \nabla \tilde{\textbf{u}}^{k+1} \|^2
 + \nu \sum_{l=0}^{n} \Delta t  \| \nabla \textbf{u}^{k+1} \|^2 
 \leq 8 C_T, \ \ C_0 \geq 1,
\endaligned
\end{equation} 
where $C_T$ is independent of $C_0$.

Noting 
\begin{equation}\label{e_error_velo5}
\aligned
& \Delta \textbf{u} - \nabla \nabla \cdot \textbf{u} = - \nabla \times \nabla \times \textbf{u},
\endaligned
\end{equation} 
and taking $ q = p^{k+1} $ in \eqref{e_model_semi4} lead to
\begin{equation}\label{e_error_velo6_pre}
\aligned
 \| \nabla p^{k+1} \| \leq & \|  \textbf{f}^{k+1} - ( \textbf{u}^{k+1} \cdot \nabla ) \textbf{u}^{k+1} \| + \nu \|  \nabla p_s^{k+1}( \tilde{\textbf{u}} ) \|.
\endaligned
\end{equation} 
Recalling \eqref{e_error_stokespre7} and lemma \ref{lem: commutator}, we have
\begin{equation}\label{e_error_velo6}
\aligned
 \nu \|  \nabla p_s^{k+1}( \tilde{\textbf{u}} ) \|^2 
\leq & \nu \alpha \| \Delta \tilde{\textbf{u}}^{k+1} \|^2 + \nu  C_{\alpha} \| \nabla  \tilde{\textbf{u}}^{k+1} \|^2,
\endaligned
\end{equation} 
where the positive constant $ \frac1 2 < \alpha < 1$.

Taking the inner product of \eqref{e_model_semi1} with $-2 \Delta t \Delta \tilde{\textbf{u}}^{k+1}$, we obtain
\begin{equation}\label{e_error_velo7}
\aligned
&( \| \nabla \tilde{\textbf{u}}^{k+1} \|^2 - \| \nabla \tilde{\textbf{u}}^{k} \|^2 + \| \nabla \tilde{\textbf{u}}^{k+1} - \nabla \tilde{\textbf{u}}^{k} \|^2) + 2 \nu \Delta t \| \Delta \tilde{\textbf{u}}^{k+1} \|^2 \\
\leq & 2\Delta t \| \Delta \tilde{\textbf{u}}^{k+1} \| \| \textbf{f}^{k+1} - ( \textbf{u}^{k}\cdot \nabla ) \textbf{u}^{k} \| + 2\Delta t \| \Delta \tilde{\textbf{u}}^{k+1} \| \|  \nabla p^{k} \| \\
\leq & 2\Delta t \| \Delta \tilde{\textbf{u}}^{k+1} \| ( \| \textbf{f}^{k+1} - ( \textbf{u}^{k}\cdot \nabla ) \textbf{u}^{k} \|  + \| \textbf{f}^{k} - ( \textbf{u}^{k}\cdot \nabla ) \textbf{u}^{k} \| + \nu  \|  \nabla p_s^{k}( \tilde{\textbf{u}} ) \| ) \\
\leq &\nu  \Delta t \| \Delta \tilde{\textbf{u}}^{k+1} \|^2 +  \nu \alpha \Delta t  \|  \Delta \tilde{\textbf{u}}^{k} \|^2  + \nu \Delta t C_{\alpha} \| \nabla  \tilde{\textbf{u}}^{k} \|^2 + \frac{1-\alpha}{4} \nu  \Delta t \| \Delta \tilde{\textbf{u}}^{k+1} \|^2\\ 
& + \frac{8}{(1-\alpha)\nu} \Delta t ( \|  \textbf{f}^{k} \|^2 + \|  \textbf{f}^{k+1} \|^2 ) +  \frac{16}{(1-\alpha)\nu} \Delta t \| ( \textbf{u}^{k} \cdot \nabla ) \textbf{u}^{k} \|^2.
\endaligned
\end{equation} 
Next we shall estimate the nonlinear term. Taking notice of the fact that by  using  Ladyzhenskaya's inequalities and Sobolev embedding theorems \cite{ladyzhenskaya1969mathematical,liu2007stability} and \eqref{e_estimate for trilinear form2}, we have 
\begin{equation}\label{e_error_velo8}
\aligned
\| ( \textbf{u}^{k} \cdot \nabla ) \textbf{u}^{k} \|^2 \leq 
\begin{cases}
\| \textbf{u}^{k}  \|^2_{L^4} \| \nabla \textbf{u}^{k}  \|^2_{L^4} \leq C \| \textbf{u}^{k}  \|_{L^2} \| \nabla \textbf{u}^{k}  \|_{L^2}^2 \| \nabla \textbf{u}^{k}  \|_{H^1}, \ \ d=2, \\
\| \textbf{u}^{k}  \|^2_{L^6} \| \nabla \textbf{u}^{k}  \|^2_{L^3} \leq C \| \nabla \textbf{u}^{k}  \|_{L^2}^3 \| \nabla \textbf{u}^{k}  \|_{H^1}, \ \ d=3.
\end{cases}
\endaligned
\end{equation} 
In addition, by using the elliptic regularity estimate $ \| \textbf{u}^{k} \|_{H^2} \leq C \| \Delta \textbf{u}^{k} \| $ and recalling $ \textbf{u}^{k}= \eta^{k} \tilde{\textbf{u}}^{k}$, we have
\begin{equation}\label{e_error_velo9}
\aligned
\| ( \textbf{u}^{k} \cdot \nabla ) \textbf{u}^{k} \|^2 \leq 
\begin{cases}
\frac{(1-\alpha)^2 \nu^2}{64} \| \Delta \tilde{\textbf{u}}^{k} \|^2 + C \| \textbf{u}^{k} \|^2 \| \nabla \tilde{\textbf{u}}^{k} \|^4, \ \ d=2, \\
\frac{(1-\alpha)^2 \nu^2}{64} \| \Delta \tilde{\textbf{u}}^{k} \|^2 + C \| \nabla \tilde{\textbf{u}}^{k} \|^6, \ \ d=3.
\end{cases}
\endaligned
\end{equation} 

Thus for $d=2$, we can recast \eqref{e_error_velo7} as follows:
\begin{equation}\label{e_error_velo10}
\aligned
&( \| \nabla \tilde{\textbf{u}}^{k+1} \|^2 - \| \nabla \tilde{\textbf{u}}^{k} \|^2 + \| \nabla \tilde{\textbf{u}}^{k+1} - \nabla \tilde{\textbf{u}}^{k} \|^2) +  \nu \Delta t \| \Delta \tilde{\textbf{u}}^{k+1} \|^2 \\
\leq & \frac{1-\alpha}{4} \nu  \Delta t \| \Delta \tilde{\textbf{u}}^{k+1} \|^2 +\frac{1-\alpha}{4} \nu  \Delta t \| \Delta \tilde{\textbf{u}}^{k} \|^2 +  \nu \alpha \Delta t  \|  \Delta \tilde{\textbf{u}}^{k} \|^2  + \nu \Delta t C_{\alpha} \| \nabla  \tilde{\textbf{u}}^{k} \|^2 
\\ 
& + C \Delta t ( \|  \textbf{f}^{k} \|^2 + \|  \textbf{f}^{k+1} \|^2 ) + C \Delta t \| \textbf{u}^{k} \|^2 \| \nabla \tilde{\textbf{u}}^{k} \|^2 \| \nabla \tilde{\textbf{u}}^{k} \|^2 \\
\leq &  \frac{1-\alpha}{4}  \nu \Delta t \| \Delta \tilde{\textbf{u}}^{k+1} \|^2   + ( \nu  C_{\alpha} +  C \| \textbf{u}^{k} \|^2 \| \nabla \tilde{\textbf{u}}^{k} \|^2 ) \Delta t \| \nabla  \tilde{\textbf{u}}^{k} \|^2  \\ 
& + (\frac{1-\alpha}{4} \nu + \alpha \nu) \Delta t \| \Delta \tilde{\textbf{u}}^{k} \|^2 + C \Delta t ( \|  \textbf{f}^{k} \|^2 + \|  \textbf{f}^{k+1} \|^2 ) .
\endaligned
\end{equation} 
Summing \eqref{e_error_velo10} over $k$, $k=0,2,\ldots,n$, using \eqref{e_error_velo4}
and lemma \ref{lem: gronwall2}, we can arrive at
\begin{equation}\label{e_error_velo11}
\aligned
& \| \nabla \tilde{\textbf{u}}^{n+1} \|^2 + \Delta t \sum\limits_{k=0}^{n} \| \Delta \tilde{\textbf{u}}^{k+1} \|^2  \\
& \ \ \ \ \ \ 
\leq  C \Delta t \sum\limits_{k=0}^{n+1} \|  \textbf{f}^{k} \|^2 + C \Delta t \| \Delta \tilde{\textbf{u}}^{0} \|^2 +C  \| \nabla \tilde{\textbf{u}}^{0} \|^2 \leq C^*, \ \ d=2,
\endaligned
\end{equation}  
where $C^*$ is independent of $\Delta t$ and $ C_0$. Recalling \eqref{e_error_velo3}, we have for $d=2$,
\begin{equation}\label{e_error_velo15}
\aligned
&  \| \nabla \textbf{u}^{n} \|^2 + \Delta t \sum\limits_{k=0}^{n} \| \Delta \textbf{u}^{k} \|^2
\leq C, 
\endaligned
\end{equation}
where $C$ is independent of $\Delta t$ and $C_0$. 

Next we consider the case with $d=3$. Using \eqref{e_error_velo9}, we can transform \eqref{e_error_velo7} into the following:
\begin{equation}\label{e_error_velo12}
\aligned
&( \| \nabla \tilde{\textbf{u}}^{k+1} \|^2 - \| \nabla \tilde{\textbf{u}}^{k} \|^2 + \| \nabla \tilde{\textbf{u}}^{k+1} - \nabla \tilde{\textbf{u}}^{k} \|^2) +  \nu \Delta t \| \Delta \tilde{\textbf{u}}^{k+1} \|^2 \\
\leq & \frac{1-\alpha}{4} \nu  \Delta t \| \Delta \tilde{\textbf{u}}^{k+1} \|^2 +\frac{1-\alpha}{4} \nu  \Delta t \| \Delta \tilde{\textbf{u}}^{k} \|^2 +  \nu \alpha \Delta t  \|  \Delta \tilde{\textbf{u}}^{k} \|^2  + \nu \Delta t C_{\alpha} \| \nabla  \tilde{\textbf{u}}^{k} \|^2 
\\ 
& + C \Delta t ( \|  \textbf{f}^{k} \|^2 + \|  \textbf{f}^{k+1} \|^2 ) +  C \Delta t  \| \nabla \tilde{\textbf{u}}^{k} \|^6.
\endaligned
\end{equation} 
Summing \eqref{e_error_velo12} over $k$, $k=0,2,\ldots,n$ implies that
\begin{equation}\label{e_error_velo13}
\aligned
& \| \nabla \tilde{\textbf{u}}^{n+1} \|^2 +  \Delta t \sum\limits_{k=0}^{n} \| \Delta \tilde{\textbf{u}}^{k+1} \|^2  \\
& \ \ \ \ \ \ 
\leq  C \Delta t \sum\limits_{k=0}^{n} \| \nabla \tilde{\textbf{u}}^{k} \|^6 + C \Delta t \sum\limits_{k=0}^{n+1} \|  \textbf{f}^{k} \|^2 + C \Delta t \| \Delta \tilde{\textbf{u}}^{0} \|^2 +C  \| \nabla \tilde{\textbf{u}}^{0} \|^2, \ \ d=3,
\endaligned
\end{equation}  
Recalling   lemma \ref{lem: local gronwall}, we let $M_0>0$ and  $F(x)=x^6$,  and choose $T^*$ satisfy that  $0<T^*<  \int_{M_0}^{\infty} dx / F(x)$, then we can estimate 
\eqref{e_error_velo13} as follows:
\begin{equation}\label{e_error_velo14}
\aligned
& \| \nabla \tilde{\textbf{u}}^{n+1} \|^2 +  \| \nabla \textbf{u}^{n+1} \|^2 + \Delta t \sum\limits_{k=0}^{n} \| \Delta \tilde{\textbf{u}}^{k+1} \|^2   + \Delta t \sum\limits_{k=0}^{n} \| \Delta \textbf{u}^{k+1} \|^2
\leq  C_*, \ \ d=3, \ \forall n\leq T_*/\Delta t,
\endaligned
\end{equation}
where $T_* = \min\{ T^*, T\}$ and $C_*$ is independent of $\Delta t$ and $C_0$.

\noindent{\bf Step 2: Estimates for $H^2$ bounds for $\tilde{e}_{\textbf{u}}^{n+1}$ in two- and three-dimensional cases.} 

We shall first start by establishing  an error equation corresponding to \eqref{e_model_semi1}. 
 Let $\textbf{S}_{\textbf{u}}^{k+1}$ be the truncation error defined by
\begin{equation}\label{e_error_step2_1}
\aligned
\textbf{S}_{\textbf{u}}^{k+1}=\frac{\partial \textbf{u}(t^{k+1})}{\partial t}- \frac{\textbf{u}(t^{k+1})-\textbf{u}(t^{k})}{\Delta t}=\frac{1}{\Delta t}\int_{t^k }^{t^{k+1}}(t^k-t)\frac{\partial^2 \textbf{u}}{\partial t^2}dt.
\endaligned
\end{equation}
Subtracting \eqref{e_model_transform1} at $t^{k+1}$ from \eqref{e_model_semi1}, we obtain
\begin{equation}\label{e_error_step2_2}
\aligned
&\frac{\tilde{e}_{\textbf{u}}^{k+1}- \tilde{e}_{\textbf{u}}^k}{\Delta t}-\nu\Delta\tilde{e}_{\textbf{u}}^{k+1}
=(\textbf{u}(t^{k+1})\cdot \nabla)\textbf{u}(t^{k+1}) \\
&\ \ \ \ \ \ \ \ \
- (\textbf{u}^{k}\cdot \nabla )\textbf{u}^{k}
-\nabla (p^k-p(t^{k+1}))+\textbf{S}_{\textbf{u}}^{k+1}.
\endaligned
\end{equation}
Next we establish an error equation for pressure corresponding to \eqref{e_model_semi4} by
\begin{equation}\label{e_error_step2_3}
\aligned
(\nabla e_p^{k+1}, \nabla q) = & \left(  ( \textbf{u}(t^{k+1} ) \cdot \nabla ) \textbf{u}( t^{k+1} ) - ( \textbf{u}^{k+1} \cdot \nabla ) \textbf{u}^{k+1} , \nabla q) \right) \\
& - ( \nu \nabla \times \nabla \times \tilde{e}_\textbf{u} ^{k+1}, \nabla q), \ \forall q\in H^1(\Omega).
\endaligned
\end{equation}
Taking $ q = e_p^{k+1} $ in \eqref{e_error_step2_3} leads to
\begin{equation}\label{e_error_step2_pre}
\aligned
 \| \nabla e_p^{k+1} \| \leq & \|   ( \textbf{u}(t^{k+1} ) \cdot \nabla ) \textbf{u}( t^{k+1} )  - ( \textbf{u}^{k+1} \cdot \nabla ) \textbf{u}^{k+1} \| + \nu \|  \nabla e_{ps}^{k+1}( \tilde{e}_\textbf{u} ) \| \\
\leq &  \nu \|  \nabla e_{ps}^{k+1}( \tilde{e}_\textbf{u} ) \|+  \|  ( e_{ \textbf{u} }^{k+1} \cdot \nabla ) \textbf{u}(t^{k+1} )  \| \\
& + \| ( \textbf{u}^{k+1} \cdot \nabla ) e_{ \textbf{u} }^{k+1} \| 
\endaligned
\end{equation} 
Recalling \eqref{e_error_stokespre7} and lemma \ref{lem: commutator}, we have
\begin{equation}\label{e_error_step2_4}
\aligned
\nu \|  \nabla e_{ps}^{k+1}( \tilde{e}_\textbf{u} ) \|^2 
\leq & \nu \alpha \| \Delta \tilde{e}_\textbf{u} ^{k+1} \|^2 + \nu  C_{\alpha} \| \nabla  \tilde{e}_\textbf{u} ^{k+1} \|^2 ,
\endaligned
\end{equation} 
where the positive constant $ \frac1 2 < \alpha < 1$.
Taking the inner product of \eqref{e_error_step2_2} with $-2 \Delta t \Delta \tilde{e}_{\textbf{u}}^{k+1}$, we obtain
\begin{equation}\label{e_error_step2_5}
\aligned
& ( \| \nabla \tilde{e}_{\textbf{u}}^{k+1} \|^2 - \| \nabla \tilde{e}_{\textbf{u}}^{k} \|^2 + \| \nabla \tilde{e}_{\textbf{u}}^{k+1}- \nabla \tilde{e}_{\textbf{u}}^{k} \|^2) + 2 \nu \Delta t \| \Delta \tilde{e}_{\textbf{u}}^{k+1} \|^2 \\
\leq & 2\Delta t \| \Delta \tilde{e}_{\textbf{u}}^{k+1}  \|  \| (\textbf{u}(t^{k+1})\cdot \nabla)\textbf{u}(t^{k+1}) - ( \textbf{u}^{k}\cdot \nabla ) \textbf{u}^{k} \| \\
& + 2\Delta t \| \Delta \tilde{e}_{\textbf{u}}^{k+1}  \| \| \nabla e_p^k \| +  2\Delta t \| \Delta \tilde{e}_{\textbf{u}}^{k+1}  \| ( \|p(t^n)-p(t^{k+1}) \|+\| \textbf{S}_{\textbf{u}}^{k+1} \| ) \\
\leq &  2\Delta t \| \Delta \tilde{e}_{\textbf{u}}^{k+1}  \| \| \nabla e_p^k \|  + 2\Delta t \| \Delta \tilde{e}_{\textbf{u}}^{k+1}  \|  (  \|  ( e_{ \textbf{u} }^{k} \cdot \nabla ) \textbf{u}(t^{k} )  \| + \| ( \textbf{u}^{k} \cdot \nabla ) e_{ \textbf{u} }^{k} \| ) \\
& +  2\Delta t \| \Delta \tilde{e}_{\textbf{u}}^{k+1}  \| ( \|p(t^n)-p(t^{k+1}) \|+\| \textbf{S}_{\textbf{u}}^{k+1} \|+ \| (\textbf{u}(t^{k+1})\cdot \nabla)\textbf{u}(t^{k+1}) - ( \textbf{u}(t^{k})\cdot \nabla)\textbf{u}(t^{k})  \|  ) \\
\leq & 2 \nu \Delta t \| \Delta \tilde{e}_{\textbf{u}}^{k+1}  \| \|  \nabla e_{ps}^{k}( \tilde{e}_\textbf{u} ) \| + 3 \Delta t \| \Delta \tilde{e}_{\textbf{u}}^{k+1}  \|  (  \|  ( e_{ \textbf{u} }^{k} \cdot \nabla ) \textbf{u}(t^{k} )  \| + \| ( \textbf{u}^{k} \cdot \nabla ) e_{ \textbf{u} }^{k} \| ) \\
& +  2\Delta t \| \Delta \tilde{e}_{\textbf{u}}^{k+1}  \| ( \|p(t^n)-p(t^{k+1}) \|+\| \textbf{S}_{\textbf{u}}^{k+1} \|+ \| (\textbf{u}(t^{k+1})\cdot \nabla)\textbf{u}(t^{k+1}) - ( \textbf{u}(t^{k})\cdot \nabla)\textbf{u}(t^{k})  \|  ).
\endaligned
\end{equation}
Using Cauchy-Schwarz inequality, the first term on the right hand side of \eqref{e_error_step2_5} can be estimated by
\begin{equation}\label{e_error_step2_6}
\aligned
& 2 \nu \Delta t \| \Delta \tilde{e}_{\textbf{u}}^{k+1}  \| \|  \nabla e_{ps}^{k}( \tilde{e}_\textbf{u} ) \| \leq \nu \Delta t  \| \Delta \tilde{e}_{\textbf{u}}^{k+1}  \|^2 +  \nu \alpha \Delta t \| \Delta \tilde{e}_\textbf{u} ^{k} \|^2+ \nu  C_{\alpha} \Delta t \| \nabla  \tilde{e}_\textbf{u} ^{k} \|^2.
\endaligned
\end{equation}
Recalling the Sobolev embedding theorems and Ladyzhenskaya’s inequalities, we have 
\begin{equation}\label{e_error_nonlinear}
\aligned
\int_{\Omega} | (\textbf{f} \cdot \nabla ) \textbf{g} |^2 \leq \left( \int_{\Omega} | \textbf{f} |^6 \right)^{1/3}  \left( \int_{\Omega} | \nabla \textbf{g} |^3 \right)^{2/3} \leq C \| \nabla \textbf{f} \|^2 \| \nabla \textbf{g} \| \| \nabla \textbf{g} \|_{H^1} .  
\endaligned
\end{equation}
Thanks to \eqref{e_error_nonlinear} and the $H^2$ boundedness for $ \tilde{\textbf{u}}^{k} $ in \eqref{e_error_velo11} and \eqref{e_error_velo14}, the second term on the right hand side of \eqref{e_error_step2_5} can be estimated by
\begin{equation}\label{e_error_step2_7}
\aligned
& 3\Delta t \| \Delta \tilde{e}_{\textbf{u}}^{k+1}  \|  (  \|  ( e_{ \textbf{u} }^{k} \cdot \nabla ) \textbf{u}(t^{k} )  \| + \| ( \textbf{u}^{k} \cdot \nabla ) e_{ \textbf{u} }^{k} \| ) \\
\leq & \frac{ (1-\alpha) \nu}{6} \Delta t  \| \Delta \tilde{e}_{\textbf{u}}^{k+1} \|^2 + C\Delta t   \|  ( e_{ \textbf{u} }^{k} \cdot \nabla ) \textbf{u}(t^{k} )  \|^2 + C \Delta t  \| ( \textbf{u}^{k} \cdot \nabla ) e_{ \textbf{u} }^{k} \|^2 \\
\leq &  \frac{ (1-\alpha) \nu}{6} \Delta t  \| \Delta \tilde{e}_{\textbf{u}}^{k+1} \|^2 + C \Delta t  \|  ( e_{ \textbf{u} }^{k} \cdot \nabla ) \textbf{u}(t^{k} )  \|^2 + C \Delta t  \| ( \textbf{u}^{k} \cdot \nabla ) e_{ \textbf{u} }^{k} \|^2 \\
\leq &  \frac{ (1-\alpha) \nu}{6}  \Delta t  \| \Delta \tilde{e}_{\textbf{u}}^{k+1} \|^2 + C \Delta t  \| \nabla e_{ \textbf{u} }^{k} \|^2 \| \nabla \textbf{u}(t^{k} ) \|  \| \nabla \textbf{u}(t^{k})  \|_{H^1} + C \Delta t \| \nabla \textbf{u}^{k} \|^2 \| \nabla e_{ \textbf{u} }^{k}  \|  \| \nabla e_{ \textbf{u} }^{k}  \|_{H^1}  \\
\leq & \frac{ (1-\alpha) \nu}{6} \Delta t  \| \Delta \tilde{e}_{\textbf{u}}^{k+1} \|^2 +   \frac{ (1-\alpha) \nu}{12} \Delta t  \| \Delta e_{\textbf{u}}^{k} \|^2 +  C \Delta t  \| \nabla e_{ \textbf{u} }^{k} \|^2 .
\endaligned
\end{equation}
From \eqref{e_model_semi3} and \eqref{e_error_velo2}, we have
\begin{equation}\label{e_error_step2_8}
\aligned
  \| \Delta e_{\textbf{u}}^{k} \|^2 \leq&  2 \| \Delta \tilde{e}_{\textbf{u}}^{k} \|^2 + 2 |1- \eta^{k} |^2 \| \Delta \tilde{\textbf{u}}^{k} \|^2 \\
\leq & 2 \| \Delta \tilde{e}_{\textbf{u}}^{k} \|^2 + 2  \| \Delta \tilde{\textbf{u}}^{k} \|^2 C_0^4 (\Delta t)^4,
\endaligned
\end{equation}
and 
\begin{equation}\label{e_error_step2_9}
\aligned
  \| \nabla e_{\textbf{u}}^{k} \|^2 \leq&  2 \| \nabla \tilde{e}_{\textbf{u}}^{k} \|^2 + 2 |1- \eta^{k} |^2 \| \nabla \tilde{\textbf{u}}^{k} \|^2 \\
\leq & 2 \| \nabla \tilde{e}_{\textbf{u}}^{k} \|^2 + 2  \| \nabla \tilde{\textbf{u}}^{k} \|^2 C_0^4 (\Delta t)^4.
\endaligned
\end{equation}
Thus we can recast \eqref{e_error_step2_7} as
\begin{equation}\label{e_error_step2_10}
\aligned
& 3\Delta t \| \Delta \tilde{e}_{\textbf{u}}^{k+1}  \|  (  \|  ( e_{ \textbf{u} }^{k} \cdot \nabla ) \textbf{u}(t^{k} )  \| + \| ( \textbf{u}^{k} \cdot \nabla ) e_{ \textbf{u} }^{k} \| ) \\
\leq & \frac{ (1-\alpha) \nu}{6} \Delta t  \| \Delta \tilde{e}_{\textbf{u}}^{k+1} \|^2 +   \frac{ (1-\alpha) \nu}{6}  \Delta t  \| \Delta \tilde{e}_{\textbf{u}}^{k} \|^2 + C \Delta t  \| \nabla \tilde{e}_{\textbf{u}}^{k} \|^2  \\
&+ C ( \| \Delta \tilde{\textbf{u}}^{k} \|^2+ \| \nabla \tilde{\textbf{u}}^{k} \|^2 )C_0^4 (\Delta t)^5.
\endaligned
\end{equation}
Using Cauchy-Schwarz inequality, the last term on the right hand side of \eqref{e_error_step2_5} can be bounded by
\begin{equation}\label{e_error_step2_11}
\aligned
&  2 \Delta t \| \Delta \tilde{e}_{\textbf{u}}^{k+1}  \| ( \|p(t^k)-p(t^{k+1}) \|+\| \textbf{S}_{\textbf{u}}^{k+1} \|+ \| (\textbf{u}(t^{k+1})\cdot \nabla)\textbf{u}(t^{k+1}) - ( \textbf{u}(t^{k})\cdot \nabla)\textbf{u}(t^{k})  \|  ) \\
\leq & \frac{ (1-\alpha) \nu}{6}  \Delta t \| \Delta \tilde{e}_{\textbf{u}}^{k+1} \|^2 + C (\Delta t)^2 \left( \int_{t^n}^{t^{k+1}} \| p_t \|^2 dt + \int_{t^n}^{t^{k+1}}\|\textbf{u}_{tt}\|^2 dt 
\right) \\
& + C (\Delta t)^2 \left(  \int_{t^n}^{t^{k+1}}\| \nabla \textbf{u}_{t}\|^2 dt \| \textbf{u}(t^{k+1}) \|_{H^2}^2 +\| \textbf{u}(t^{k}) \|_{H^1}^2 \int_{t^n}^{t^{k+1}}\| \textbf{u}_{t}\|_{H^2}^2 dt 
\right) .
\endaligned
\end{equation}
Finally,  combining \eqref{e_error_step2_5} with \eqref{e_error_step2_6}-\eqref{e_error_step2_11}, we obtain
\begin{equation}\label{e_error_step2_12}
\aligned
& ( \| \nabla \tilde{e}_{\textbf{u}}^{k+1} \|^2 - \| \nabla \tilde{e}_{\textbf{u}}^{k} \|^2 + \| \nabla \tilde{e}_{\textbf{u}}^{k+1}- \nabla \tilde{e}_{\textbf{u}}^{k} \|^2) + (1-  \frac{ 1-\alpha}{3})  \nu \Delta t \| \Delta \tilde{e}_{\textbf{u}}^{k+1} \|^2 \\
\leq &  ( \alpha +  \frac{ 1-\alpha }{6}  ) \nu  \Delta t \| \Delta \tilde{e}_\textbf{u} ^{k} \|^2+ \nu  C_{\alpha} \Delta t \| \nabla  \tilde{e}_\textbf{u} ^{k} \|^2+ C \| \nabla \tilde{e}_{\textbf{u}}^{k} \|^2  \\
&  + C ( \| \Delta \tilde{\textbf{u}}^{k} \|^2+ \| \nabla \tilde{\textbf{u}}^{k} \|^2 )C_0^4 (\Delta t)^5  + C (\Delta t)^2 \left( \int_{t^n}^{t^{k+1}} \| p_t \|^2 dt + \int_{t^n}^{t^{k+1}}\|\textbf{u}_{tt}\|^2 dt \right) \\
& + C (\Delta t)^2 \left(  \int_{t^n}^{t^{k+1}}\| \nabla \textbf{u}_{t}\|^2 dt \| \textbf{u}(t^{k+1}) \|_{H^2}^2 +\| \textbf{u}(t^{k}) \|_{H^1}^2 \int_{t^n}^{t^{k+1}}\| \textbf{u}_{t}\|_{H^2}^2 dt 
\right).
\endaligned
\end{equation}
Summing \eqref{e_error_step2_12} over $k$, $k=0,2,\ldots,n$, using \eqref{e_error_velo11}, \eqref{e_error_velo14}
and lemma \ref{lem: gronwall2}, we can arrive at
\begin{equation}\label{e_error_step2_13}
\aligned
& \|  \nabla \tilde{e}_{\textbf{u}}^{n+1} \|^2  + \Delta t \sum\limits_{k=0}^{n} \| \Delta \tilde{e}_{\textbf{u}}^{k+1} \|^2  \leq 
\begin{cases}
C_1 \left(1+C_0^4 (\Delta t)^2 \right) (\Delta t)^2, \ \ d=2,  \ \forall n\leq T/\Delta t, \\
C_1 \left(1+C_0^4 (\Delta t)^2 \right) (\Delta t)^2,  \ \ d=3, \ \forall n\leq T_*/\Delta t,
\end{cases}
\endaligned
\end{equation}  
where $C_1$ is independent of $C_0$ and $\Delta t$.

Next we estimate $\|  \tilde{e}_{\textbf{u}}^{n+1} \|$. Taking the inner product of \eqref{e_error_step2_2} with $ 2 \Delta t \tilde{e}_{\textbf{u}}^{k+1}$ and using the similar procedure as above, we can easily obtain that
\begin{equation}\label{e_error_step2_14}
\aligned
 \|  \tilde{e}_{\textbf{u}}^{n+1} \|^2  + \Delta t \sum\limits_{k=0}^{n} \| \nabla \tilde{e}_{\textbf{u}}^{k+1} \|^2  \leq  & C_2 \Delta t \sum\limits_{k=0}^{n}  \|  \tilde{e}_{\textbf{u}}^{n+1} \|^2 + C \Delta t \sum\limits_{k=0}^{n} \| \Delta \tilde{e}_{\textbf{u}}^{k+1} \|^2 \\
&  + C ( \| \Delta \tilde{\textbf{u}}^{k} \|^2+ \| \nabla \tilde{\textbf{u}}^{k} \|^2 )C_0^4 (\Delta t)^4  + C (\Delta t)^2.
\endaligned
\end{equation}  
where $C_2$ is independent of $C_0$ and $\Delta t$.
Choosing $\Delta t \leq \frac{1}{2C_2}$, using \eqref{e_error_step2_13} and discrete Gronwall inequality, we have
\begin{equation}\label{e_error_step2_15}
\aligned
& \|  \tilde{e}_{\textbf{u}}^{n+1} \|^2  + \Delta t \sum\limits_{k=0}^{n} \| \nabla \tilde{e}_{\textbf{u}}^{k+1} \|^2  \leq  
\begin{cases}
C_3 \left(1+C_0^4 (\Delta t)^2 \right) (\Delta t)^2, \ \ d=2,  \ \forall n\leq T/\Delta t, \\
C_3 \left(1+C_0^4 (\Delta t)^2 \right) (\Delta t)^2,  \ \ d=3, \ \forall n\leq T_*/\Delta t,
\end{cases}
\endaligned
\end{equation}  
where $C_3$ is independent of $C_0$ and $\Delta t$.

\noindent{\bf Step 3: Estimates for $| 1-\xi^{n+1} |$.} 
We shall first start by establishing  an error equation corresponding to \eqref{e_model_semi2}. 
 Let $\textbf{S}_{ R }^{k+1}$ be the truncation error defined by
\begin{equation}\label{e_error_step3_1}
\aligned
\textbf{S}_{ R }^{k+1}=\frac{\partial R (t^{k+1})}{\partial t}- \frac{ R (t^{k+1})-R(t^{k})}{\Delta t}=\frac{1}{\Delta t}\int_{t^k }^{t^{k+1}}(t^k-t)\frac{\partial^2 R }{\partial t^2}dt.
\endaligned
\end{equation}
Subtracting \eqref{e_model_transform2} at $t^{k+1}$ from \eqref{e_model_semi2}, we obtain
\begin{equation}\label{e_error_step3_2}
\aligned
\frac{ e_{R}^{k+1} -  e_{R}^{k} }{2\Delta t} = & \frac{R^{k+1} }{ E( \tilde{\textbf{u}}^{k+1} ) +K_0} \left(  -\nu \| \nabla \tilde{\textbf{u}}^{k+1} \|^2 + ( \textbf{f}^{k+1}, \tilde{\textbf{u}}^{k+1} ) \right) \\
& - \frac{R(t^{k+1} )}{ E( \textbf{u}(t^{k+1}) ) +K_0} \left(  -\nu \| \nabla \textbf{u}(t^{k+1}) \|^2 + ( \textbf{f}^{k+1}, \textbf{u}(t^{k+1}) ) \right) + \textbf{S}_{ R }^{k+1} .
\endaligned
\end{equation}
Using \eqref{e_stability1}, the first two terms on the right hand side of \eqref{e_error_step3_2} can be estimated by
\begin{equation}\label{e_error_step3_3}
\aligned
& \frac{R^{k+1} }{ E( \tilde{\textbf{u}}^{k+1} ) +K_0} \left(  -\nu \| \nabla \tilde{\textbf{u}}^{k+1} \|^2 + ( \textbf{f}^{k+1}, \tilde{\textbf{u}}^{k+1} ) \right) \\
& - \frac{R(t^{k+1} )}{ E( \textbf{u}(t^{k+1}) ) +K_0} \left(  -\nu \| \nabla \textbf{u}(t^{k+1}) \|^2 + ( \textbf{f}^{k+1}, \textbf{u}(t^{k+1}) ) \right) \\
= &  \frac{R^{k+1} }{ E( \tilde{\textbf{u}}^{k+1} ) +K_0} \left( \nu \| \nabla \textbf{u}(t^{k+1}) \|^2 - \nu \| \nabla \tilde{\textbf{u}}^{k+1} \|^2  + ( \textbf{f}^{k+1}, \tilde{e}_{\textbf{u}}^{k+1} )\right) \\
&+ \left( \frac{R^{k+1} }{ E( \tilde{\textbf{u}}^{k+1} ) +K_0} - \frac{R(t^{k+1} )}{ E( \textbf{u}(t^{k+1}) ) +K_0}  \right)
\left(  -\nu \| \nabla \textbf{u}(t^{k+1}) \|^2 + ( \textbf{f}^{k+1}, \textbf{u}(t^{k+1}) ) \right) \\
\leq & C \| \nabla \tilde{ \textbf{u} }^{k+1} \| \| \nabla \tilde{e}_{\textbf{u}}^{k+1} \| + C  \|  \tilde{e}_{\textbf{u}}^{k+1} \| +
C | E( \textbf{u}(t^{k+1}) ) - E( \tilde{\textbf{u}}^{k+1} ) | + C |e_{R}^{k+1} | \\
\leq & C \| \nabla \tilde{ \textbf{u} }^{k+1} \| \| \nabla \tilde{e}_{\textbf{u}}^{k+1} \| + C  \|  \tilde{e}_{\textbf{u}}^{k+1} \| +
C \| \tilde{ \textbf{u} }^{k+1} \| \|  \tilde{e}_{\textbf{u}}^{k+1} \|+ C |e_{R}^{k+1} |.
\endaligned
\end{equation}
Then taking the inner product of  \eqref{e_error_step3_2} with $2 \Delta t e_{R}^{k+1}$ leads to
\begin{equation}\label{e_error_step3_4}
\aligned
& ( | e_{R}^{k+1} |^2 - | e_{R}^{k} |^2 + | e_{R}^{k+1}- e_{R}^{k} |^2)  \\
\leq &  C \Delta t | e_{R}^{k+1} |^2 + C \Delta t  \| \nabla \tilde{e}_{\textbf{u}}^{k+1} \|^2 + C  \Delta t \|  \tilde{e}_{\textbf{u}}^{k+1} \|^2 
\endaligned
\end{equation}
Summing \eqref{e_error_step3_4} over $k$, $k=0,2,\ldots,n$ and using \eqref{e_error_step2_13} and \eqref{e_error_step2_15} lead to
\begin{equation}\label{e_error_step3_5}
\aligned
 | e_{R}^{n+1} |^2 \leq & C_4 \Delta t \sum\limits_{k=0}^{n} | e_{R}^{k+1} |^2 + 
C \Delta t \sum\limits_{k=0}^{n} \| \nabla \tilde{e}_{\textbf{u}}^{k+1} \|^2 + 
C \Delta t \sum\limits_{k=0}^{n} \| \tilde{e}_{\textbf{u}}^{k+1} \|^2 \\
\leq &
\begin{cases}
 C_4 \Delta t \sum\limits_{k=0}^{n} | e_{R}^{k+1} |^2 + C \left(1+C_0^4 (\Delta t)^2 \right) (\Delta t)^2, \ \ d=2,  \ \forall n\leq T/\Delta t, \\
 C_4 \Delta t \sum\limits_{k=0}^{n} | e_{R}^{k+1} |^2  + C \left(1+C_0^4 (\Delta t)^2 \right) (\Delta t)^2,  \ \ d=3, \ \forall n\leq T_*/\Delta t,
 \end{cases}
\endaligned
\end{equation}
where $C_4$ and $C$ are independent of $C_0$ and $\Delta t$. Thus choosing $\Delta t \leq \frac{1}{2C_4}$ and using discrete Gronwall inequality, we have
\begin{equation}\label{e_error_step3_6}
\aligned
 | e_{R}^{n+1} |^2 \leq & 
\begin{cases}
 C_5 \left(1+C_0^4 (\Delta t)^2 \right) (\Delta t)^2, \ \ d=2,  \ \forall n\leq T/\Delta t, \\
 C_5 \left(1+C_0^4 (\Delta t)^2 \right) (\Delta t)^2,  \ \ d=3, \ \forall n\leq T_*/\Delta t,
 \end{cases}
\endaligned
\end{equation}
where $C_5$ is independent of $C_0$ and $\Delta t$.

Next we  finish the induction process as follows. Recalling \eqref{e_model_semi3}, we have
\begin{equation}\label{e_error_step3_7}
\aligned
 | 1- \xi^{n+1} | =  & | \frac{R(t^{k+1} )}{ E( \textbf{u}(t^{k+1}) ) +K_0}  - \frac{R^{n+1} }{ E( \tilde{\textbf{u}}^{n+1} ) +K_0} | \\
 \leq & C(  | e_{R}^{n+1} | + \| \tilde{e}_{\textbf{u}}^{n+1} \| ) \\
  \leq &
\begin{cases}
 C_6 \Delta t \sqrt{  1+C_0^4 (\Delta t)^2 }, \ \ d=2,  \ \forall n\leq T/\Delta t, \\
 C_6 \Delta t \sqrt{1+C_0^4 (\Delta t)^2 },  \ \ d=3, \ \forall n\leq T_*/\Delta t,
 \end{cases}
\endaligned
\end{equation}
where $C_6$ is independent of $C_0$ and $\Delta t$.

Let $C_0=\max\{ 2 C_6, \sqrt{2C_4 }, \sqrt{2C_2 }, 4\}$ and $\Delta t \leq \frac{1}{ 1+C_0^2 }$,  we can obtain
\begin{equation}\label{e_error_step3_8}
\aligned
 C_6 \sqrt{  1+C_0^4 (\Delta t)^2 } \leq C_6 (1+C_0^2 \Delta t ) \leq C_0.
\endaligned
\end{equation}
Then combining \eqref{e_error_step3_7} with \eqref{e_error_step3_8} results in
\begin{equation}\label{e_error_step3_9}
\aligned
 | 1- \xi^{n+1} | 
  \leq &
\begin{cases}
 C_0 \Delta t, \ \ d=2, \ \forall n\leq T/\Delta t, \\
 C_0 \Delta t,  \ \ d=3, \ \forall n\leq T_*/\Delta t,
 \end{cases}
\endaligned
\end{equation}
which completes the induction process \eqref{e_error_velo1}.

Now combining \eqref{e_error_step2_13} with \eqref{e_error_step2_15}, we have
\begin{equation}\label{e_error_step3_10}
\aligned
& \|  \tilde{e}_{\textbf{u}}^{n+1} \|^2 + \|  \nabla \tilde{e}_{\textbf{u}}^{n+1} \|^2  + \Delta t \sum\limits_{k=0}^{n} \| \Delta \tilde{e}_{\textbf{u}}^{k+1} \|^2  \leq 
\begin{cases}
C (\Delta t)^2, \ \ d=2, \ \forall n\leq T/\Delta t, \\
C (\Delta t)^2,  \ \ d=3, \ \forall n\leq T_*/\Delta t.
\end{cases}
\endaligned
\end{equation} 

Noting \eqref{e_model_semi3} and \eqref{e_error_step3_9}, and using the stability results \eqref{e_error_velo14} and \eqref{e_error_velo15}, we have
\begin{equation}\label{e_error_step3_11}
\aligned
& \|  e_{\textbf{u}}^{n+1} \|^2 + \|  \nabla e_{\textbf{u}}^{n+1} \|^2  + \Delta t \sum\limits_{k=0}^{n} \| \Delta e_{\textbf{u}}^{k+1} \|^2  \\
\leq & 2 ( \|  \tilde{e}_{\textbf{u}}^{n+1} \|^2+ |\xi^{n+1} -1|^4 \| \tilde{\textbf{u}}^{n+1} \|^2 )
+ 2 ( \|  \nabla\tilde{e}_{\textbf{u}}^{n+1} \|^2+|\xi^{n+1} -1|^4 \| \nabla \tilde{\textbf{u}}^{n+1} \|^2 ) \\
&+2 \Delta t \sum\limits_{k=0}^{n} ( \| \Delta \tilde{e}_{\textbf{u}}^{k+1} \|^2 +  |\xi^{n+1} -1|^4 \| \Delta \tilde{\textbf{u}}^{n+1} \|^2 ) \\
\leq & 2 ( \|  \tilde{e}_{\textbf{u}}^{n+1} \|^2+ |\xi^{n+1} -1|^4 \| \tilde{\textbf{u}}^{n+1} \|^2 )
+ 2 ( \|  \nabla\tilde{e}_{\textbf{u}}^{n+1} \|^2+ |\xi^{n+1} -1|^4 \| \nabla \tilde{\textbf{u}}^{n+1} \|^2 ) \\
&+2 \Delta t \sum\limits_{k=0}^{n} ( \| \Delta \tilde{e}_{\textbf{u}}^{k+1} \|^2 +  |\xi^{n+1} -1|^4 \| \Delta \tilde{\textbf{u}}^{n+1} \|^2 ) \\
\leq &
\begin{cases}
C (\Delta t)^2, \ \ d=2,  \ \forall n\leq T/\Delta t, \\
C (\Delta t)^2,  \ \ d=3, \ \forall n\leq T_*/\Delta t,
\end{cases}
\endaligned
\end{equation} 
which leads to the desired results \eqref{e_error_estimate_u_final}.

It remains to  estimate the  pressure error. Recalling \eqref{e_error_step2_4}, we can transform \eqref{e_error_step2_pre} into the following:
\begin{equation}\label{e_error_step3_12}
\aligned
 \Delta t \sum\limits_{k=0}^{n} \| \nabla e_p^{k+1} \|^2 \leq &  C \Delta t \sum\limits_{k=0}^{n}  \| \Delta \tilde{e}_\textbf{u} ^{k+1} \|^2 + C \Delta t \sum\limits_{k=0}^{n}  \| \nabla  \tilde{e}_\textbf{u} ^{k+1} \|^2 \\
 & +  C \Delta t \sum\limits_{k=0}^{n}  \| \nabla e_{ \textbf{u} }^{k+1} \|^2 \| \nabla \textbf{u}(t^{k+1} ) \|  \| \nabla \textbf{u}(t^{k+1})  \|_{H^1} \\
 & + C \Delta t \sum\limits_{k=0}^{n} \Delta t \| \nabla \textbf{u}^{k+1} \|^2 \| \nabla e_{ \textbf{u} }^{k+1}  \|  \| \nabla e_{ \textbf{u} }^{k+1}  \|_{H^1} \\
\leq &
\begin{cases}
C (\Delta t)^2, \ \ d=2, \ \forall n\leq T/\Delta t, \\
C (\Delta t)^2,  \ \ d=3, \ \forall n\leq T_*/\Delta t.
\end{cases}
\endaligned
\end{equation} 

Taking $ q = \Delta^{-1} e_p^{k+1} $ in \eqref{e_error_step2_3} and using \eqref{e_estimate for trilinear form}, we can obtain 
\begin{equation}\label{e_error_step3_13}
\aligned
 \|  e_p^{k+1} \|^2 = & \left(  ( \textbf{u}(t^{k+1} ) \cdot \nabla ) \textbf{u}( t^{k+1} ) - ( \textbf{u}^{k+1} \cdot \nabla ) \textbf{u}^{k+1} , \Delta^{-1} e_p^{k+1} ) \right) \\
& - ( \nu \nabla \times \nabla \times \tilde{e}_\textbf{u} ^{k+1}, \Delta^{-1} e_p^{k+1} ) \\
\leq & C \|  e_{ \textbf{u} }^{k+1} \|_1 \| \nabla \textbf{u}(t^{k+1} )  \|_1 + C \| \textbf{u}^{k+1} \|_1 \|  e_{ \textbf{u} }^{k+1} \|_1 \\
&+C \| \nabla e_{ \textbf{u} }^{k+1} \|^2 
+ \frac{1}{2}  \|  e_p^{k+1} \|^2 \\
\leq &
\begin{cases}
C (\Delta t)^2, \ \ d=2, \ \forall n\leq T/\Delta t, \\
C (\Delta t)^2,  \ \ d=3, \ \forall n\leq T_*/\Delta t,
\end{cases}
\endaligned
\end{equation} 
which leads to the desired results \eqref{e_error_estimate_p_final}.
\end{proof}

 \section{Numerical experiments and concluding remarks}
 We present in this section some numerical experiments followed by some concluding remarks.
  \subsection{Numerical results}
We first present some numerical tests  to verify the accuracy of the first-order GSAV scheme with consistent splitting method \eqref{e_model_semi1}-\eqref{e_model_semi4} for the Navier-Stokes equations.
In all examples below, we take $\Omega=(0,1)\times(0,1)$.
We set $T=1$, $K_0=1$ and the spatial discretization is based on the MAC scheme on the staggered grid with $N_x=N_y=250$ so that the spatial discretization error is negligible compared to the time discretization error for the time steps used in the experiments.

{\bf Example 1}. The right hand side of the equations are computed according to the analytic solution given by:\\
\begin{equation*}
\aligned
\begin{cases}
p(x,y,t)=t(x^3-0.25),\\
u_1(x,y,t)=-t x^2(x-1)^2y(y-1)(2y-1),\\
u_2(x,y,t)=t y^2(y-1)^2x(x-1)(2x-1).
\end{cases}
\endaligned
\end{equation*}

{\bf Example 2}. The right hand side of the equations are computed according to the analytic solution given by:\\
\begin{equation*}
\aligned
\begin{cases}
p(x,y,t)=\sin(t)(\sin(\pi y)-2/\pi),\\
u_1(x,y,t)= \sin(t)\sin^2(\pi x)\sin(2\pi y),\\
u_2(x,y,t)=- \sin(t)\sin(2\pi x)\sin^2(\pi y).
\end{cases}
\endaligned
\end{equation*}
We demonstrate numerical results for Examples 1 and 2 with different viscosity coefficients $\nu=1,0.1,0.01$ in Tables \ref{table1_example1}-\ref{table3_example2}. It can be easily observed that the numerical results for the velocity and pressure in different norms are all consistent with the error estimates in Theorem  \ref{the: error_estimate_final}.

\begin{table}[htbp]
\renewcommand{\arraystretch}{1.1}
\small
\centering
\caption{Errors and convergence rates for Example 1 with $\nu=1$. }\label{table1_example1}
\begin{tabular}{p{1.0cm}p{1.2cm}p{1.2cm}p{1.2cm}p{1.2cm}p{1.2cm}p{1.2cm}p{1.2cm}p{1.2cm}p{1.2cm}p{1.2cm} }\hline
$\Delta t$    &$\|e_{\textbf{u}}\|_{l^{\infty}}$    &Rate &$\|\nabla e_{\textbf{u}} \|_{l^{\infty} }$   &Rate &$\|e_{p}\|_{l^{\infty}}$    &Rate &$\|\nabla e_{p} \|_{l^{2} }$   &Rate
  \\ \hline
$1/10 $    &8.45E-3  & ---    &4.30E-2   &---  &5.27E-2     &--- &2.85E-1   &---    \\
$1/20$    &4.43E-3   &0.93    &2.28E-2   &0.92 &2.87E-2   &0.88
&1.92E-1     &0.57  \\
$1/40$    &2.24E-3  &0.98     &1.15E-2   &0.98 &1.46E-2   &0.97
&1.12E-1         &0.78   \\
$1/80$    &1.12E-3  &1.00    &5.78E-3    &1.00   &7.32E-3    &1.00
&5.98E-2         &0.90   \\
\hline
\end{tabular}
\end{table}

\begin{table}[htbp]
\renewcommand{\arraystretch}{1.1}
\small
\centering
\caption{Errors and convergence rates for Example 1 with $\nu=0.1$. }\label{table2_example1}
\begin{tabular}{p{1.0cm}p{1.2cm}p{1.2cm}p{1.2cm}p{1.2cm}p{1.2cm}p{1.2cm}p{1.2cm}p{1.2cm}p{1.2cm}p{1.2cm} }\hline
$\Delta t$    &$\|e_{\textbf{u}}\|_{l^{\infty}}$    &Rate &$\|\nabla e_{\textbf{u}} \|_{l^{\infty} }$   &Rate &$\|e_{p}\|_{l^{\infty}}$    &Rate &$\|\nabla e_{p} \|_{l^{2} }$   &Rate
  \\ \hline
$1/10 $    &5.10E-2  & ---    &2.74E-1   &---  &3.36E-2     &--- &1.86E-1       &--- \\
$1/20$    &2.71E-2   &0.91    &1.47E-1   &0.90 &1.85E-2   &0.86
&1.19E-1     &0.64  \\
$1/40$    &1.40E-2  &0.95     &7.62E-2   &0.95 &9.70E-3   &0.93
&6.88E-2         &0.80   \\
$1/80$    &7.10E-3  &0.98    &3.88E-2    &0.97   &4.95E-3    &0.97
&3.69E-2       &0.90   \\
\hline
\end{tabular}
\end{table}

\begin{table}[htbp]
\renewcommand{\arraystretch}{1.1}
\small
\centering
\caption{Errors and convergence rates for Example 1 with $\nu=0.01$. }\label{table3_example1}
\begin{tabular}{p{1.0cm}p{1.2cm}p{1.2cm}p{1.2cm}p{1.2cm}p{1.2cm}p{1.2cm}p{1.2cm}p{1.2cm}p{1.2cm}p{1.2cm} }\hline
$\Delta t$    &$\|e_{\textbf{u}}\|_{l^{\infty}}$    &Rate &$\|\nabla e_{\textbf{u}} \|_{l^{\infty} }$   &Rate &$\|e_{p}\|_{l^{\infty}}$    &Rate &$\|\nabla e_{p} \|_{l^{2} }$   &Rate
  \\ \hline
$1/10 $    &1.00E-1  & ---    &7.66E-1   &---  &1.45E-2     &--- &8.36E-2       &--- \\
$1/20$    &5.11E-2   &0.97    &3.97E-1   &0.95 &6.41E-3   &1.18
&4.76E-2     &0.81  \\
$1/40$    &2.58E-2  &0.98     &2.03E-1   &0.97 &3.01E-3   &1.09
&2.63E-2         &0.86   \\
$1/80$    &1.30E-2  &0.99    &1.02E-1    &0.99   &1.46E-3    &1.04
&1.39E-2       &0.92   \\
\hline
\end{tabular}
\end{table}

\begin{table}[htbp]
\renewcommand{\arraystretch}{1.1}
\small
\centering
\caption{Errors and convergence rates for Example 2 with $\nu=1$. }\label{table1_example2}
\begin{tabular}{p{1.0cm}p{1.2cm}p{1.2cm}p{1.2cm}p{1.2cm}p{1.2cm}p{1.2cm}p{1.2cm}p{1.2cm}p{1.2cm}p{1.2cm} }\hline
$\Delta t$    &$\|e_{\textbf{u}}\|_{l^{\infty}}$    &Rate &$\|\nabla e_{\textbf{u}} \|_{l^{\infty} }$   &Rate &$\|e_{p}\|_{l^{\infty}}$    &Rate &$\|\nabla e_{p} \|_{l^{2} }$   &Rate
  \\ \hline
$1/10 $    &6.59E-3  & ---    &5.04E-2   &---  &4.17E-2     &--- &4.06E-1   &---    \\
$1/20$    &3.24E-3   &1.02    &2.50E-2   &1.01 &2.15E-2   &0.96
&2.62E-1     &0.63  \\
$1/40$    &1.59E-3  &1.03     &1.22E-2   &1.03 &1.04E-2   &1.04
&1.48E-1         &0.82   \\
$1/80$    &7.86E-4  &1.02    &6.02E-3    &1.02   &5.05E-3    &1.04
&7.79E-2         &0.93   \\
\hline
\end{tabular}
\end{table}

\begin{table}[htbp]
\renewcommand{\arraystretch}{1.1}
\small
\centering
\caption{Errors and convergence rates for Example 2 with $\nu=0.1$. }\label{table2_example2}
\begin{tabular}{p{1.0cm}p{1.2cm}p{1.2cm}p{1.2cm}p{1.2cm}p{1.2cm}p{1.2cm}p{1.2cm}p{1.2cm}p{1.2cm}p{1.2cm} }\hline
$\Delta t$    &$\|e_{\textbf{u}}\|_{l^{\infty}}$    &Rate &$\|\nabla e_{\textbf{u}} \|_{l^{\infty} }$   &Rate &$\|e_{p}\|_{l^{\infty}}$    &Rate &$\|\nabla e_{p} \|_{l^{2} }$   &Rate
  \\ \hline
$1/10 $    &6.48E-2  & ---    &4.59E-1   &---  &4.10E-2     &--- &3.45E-1       &--- \\
$1/20$    &3.34E-2   &0.95    &2.58E-1   &0.94 &2.26E-2   &0.86
&2.05E-1     &0.66  \\
$1/40$    &1.69E-2  &0.98     &1.32E-1   &0.97 &1.17E-2   &0.95
&1.17E-1         &0.81   \\
$1/80$    &8.52E-3  &0.99    &6.62E-2    &0.99   &5.90E-3    &0.98
&6.21E-2       &0.91   \\
\hline
\end{tabular}
\end{table}

\begin{table}[htbp]
\renewcommand{\arraystretch}{1.1}
\small
\centering
\caption{Errors and convergence rates for Example 2 with $\nu=0.01$. }\label{table3_example2}
\begin{tabular}{p{1.0cm}p{1.2cm}p{1.2cm}p{1.2cm}p{1.2cm}p{1.2cm}p{1.2cm}p{1.2cm}p{1.2cm}p{1.2cm}p{1.2cm} }\hline
$\Delta t$    &$\|e_{\textbf{u}}\|_{l^{\infty}}$    &Rate &$\|\nabla e_{\textbf{u}} \|_{l^{\infty} }$   &Rate &$\|e_{p}\|_{l^{\infty}}$    &Rate &$\|\nabla e_{p} \|_{l^{2} }$   &Rate
  \\ \hline
$1/10 $    &2.54E-1  & ---    &2.50E-0   &---  &1.36E-1     &--- &5.96E-1       &--- \\
$1/20$    &1.37E-1   &0.89    &1.36E-0   &0.88 &7.57E-2   &0.85
&3.01E-1     &0.98  \\
$1/40$    &7.07E-2  &0.95     &7.00E-1   &0.96 &3.89E-2   &0.96
&1.48E-1         &1.02   \\
$1/80$    &3.59E-2  &0.98    &3.54E-1    &0.98   &1.96E-2    &0.99
&7.32E-2       &1.02   \\
\hline
\end{tabular}
\end{table}

\subsection{Concluding remarks}

We carried out a rigorous error analysis of the first-order semi-discrete (in time) consistent splitting GSAV scheme for the Navier-Stokes equations with no-slip boundary conditions. The scheme is linear,  unconditionally stable, and only requires solving a sequence of  Poisson type equations at each time step.  Thanks to its unconditional stability, we were able to derive  optimal global  (resp. local) in time  error estimates in the two (resp. three) dimensional case for the velocity and pressure approximations.  To the best of our knowledge, this is the first global in time  error estimate for a consistent splitting scheme for the  Navier-Stokes equations with no-slip boundary conditions. 

Although we only considered  semi-discrete (in time) case in this paper, the analysis can be extended, albeit tedious, to fully discrete  approximations with $C^1$ subspaces for the velocity and $C^0$ subspaces for the pressure similarly as in \cite{liu2009error}. The consistent splitting GSAV scheme can also be easily extended to higher-order \cite{wu2022new}. However, it is a non-trivial matter to extend the current  error analysis to high-order, which will be a   subject of future study.


\bibliographystyle{siamplain}
\bibliography{NSE_CS}

\begin{thebibliography}{10}

\bibitem{brezzi2012mixed}
{\sc F.~Brezzi and M.~Fortin}, {\em Mixed and hybrid finite element methods},
  vol.~15, Springer Science \& Business Media, 2012.

\bibitem{chorin1968numerical}
{\sc A.~J. Chorin}, {\em Numerical solution of the {N}avier-{S}tokes
  equations}, Mathematics of Computation, 22 (1968), pp.~745--762.

\bibitem{E2003Gauge}
{\sc W.~E and J.-G. Liu}, {\em Gauge method for viscous incompressible flows},
  Communications in Mathematical Sciences, 1 (2003), pp.~317--332.

\bibitem{elman2014finite}
{\sc H.~C. Elman, D.~J. Silvester, and A.~J. Wathen}, {\em Finite elements and
  fast iterative solvers: with applications in incompressible fluid dynamics},
  Oxford University Press, USA, 2014.

\bibitem{girault1979finite}
{\sc V.~Girault and P.-A. Raviart}, {\em Finite element approximation of the
  {N}avier-{S}tokes equations}, Lecture Notes in Mathematics, Berlin Springer
  Verlag, 749 (1979).

\bibitem{glowinski2003finite}
{\sc R.~Glowinski}, {\em Finite element methods for incompressible viscous
  flow}, Handbook of Numerical Analysis, 9 (2003), pp.~3--1176.

\bibitem{guermond2003new}
{\sc J.~Guermond and J.~Shen}, {\em A new class of truly consistent splitting
  schemes for incompressible flows}, Journal of Computational Physics, 192
  (2003), pp.~262--276.

\bibitem{guermond2004error}
{\sc J.~Guermond and J.~Shen}, {\em On the error estimates for the rotational
  pressure-correction projection methods}, Mathematics of Computation, 73
  (2004), pp.~1719--1737.

\bibitem{GMS06}
{\sc J.~L. Guermond, P.~Minev, and J.~Shen}, {\em An overview of projection
  methods for incompressible flows}, Comput. Methods Appl. Mech. Engrg., 195
  (2006), pp.~6011--6045.

\bibitem{guermond2003velocity}
{\sc J.-L. Guermond and J.~Shen}, {\em Velocity-correction projection methods
  for incompressible flows}, SIAM Journal on Numerical Analysis, 41 (2003),
  pp.~112--134.

\bibitem{gunzburger2012finite}
{\sc M.~D. Gunzburger}, {\em Finite element methods for viscous incompressible
  flows: a guide to theory, practice, and algorithms}, Elsevier, 2012.

\bibitem{HeSu07}
{\sc Y.~He and W.~Sun}, {\em Stability and convergence of the
  {C}rank-{N}icolson/{A}dams-{B}ashforth scheme for the time-dependent
  {N}avier-{S}tokes equations}, SIAM J. Numer. Anal., 45 (2007), pp.~837--869.

\bibitem{heywood1982finite}
{\sc J.~G. Heywood and R.~Rannacher}, {\em Finite element approximation of the
  nonstationary {N}avier-{S}tokes problem. {I}. regularity of solutions and
  second-order error estimates for spatial discretization}, SIAM Journal on
  Numerical Analysis, 19 (1982), pp.~275--311.

\bibitem{huang2021stability}
{\sc F.~Huang and J.~Shen}, {\em Stability and error analysis of a class of
  high-order {IMEX} schemes for {N}avier--{S}tokes equations with periodic
  boundary conditions}, SIAM Journal on Numerical Analysis, 59 (2021),
  pp.~2926--2954.

\bibitem{MR4383075}
{\sc F.~Huang and J.~Shen}, {\em A new class of implicit-explicit {BDF{$k$}}
  {SAV} schemes for general dissipative systems and their error analysis},
  Comput. Methods Appl. Mech. Engrg., 392 (2022), pp.~Paper No. 114718, 25.

\bibitem{Johnston2004Accurate}
{\sc H.~Johnston and J.-G. Liu}, {\em Accurate, stable and efficient
  {N}avier–{S}tokes solvers based on explicit treatment of the pressure
  term}, Journal of Computational Physics, 199 (2004), pp.~221--259.

\bibitem{ladyzhenskaya1969mathematical}
{\sc O.~A. Ladyzhenskaya}, {\em The mathematical theory of viscous
  incompressible flow}, Gordon \& Breach,  (1969).

\bibitem{li2020error}
{\sc X.~Li and J.~Shen}, {\em Error analysis of the {SAV}-{MAC} scheme for the
  {N}avier-{S}tokes equations}, SIAM Journal on Numerical Analysis, 58 (2020),
  pp.~2465--2491.

\bibitem{li2022new}
{\sc X.~Li, J.~Shen, and Z.~Liu}, {\em New {SAV}-pressure correction methods
  for the {N}avier-{S}tokes equations: stability and error analysis},
  Mathematics of Computation, 91 (2022), pp.~141--167.

\bibitem{lin2019numerical}
{\sc L.~Lin, Z.~Yang, and S.~Dong}, {\em Numerical approximation of
  incompressible {N}avier-{S}tokes equations based on an auxiliary energy
  variable}, Journal of Computational Physics, 388 (2019), pp.~1--22.

\bibitem{liu2007stability}
{\sc J.-G. Liu, J.~Liu, and R.~L. Pego}, {\em Stability and convergence of
  efficient {N}avier-{S}tokes solvers via a commutator estimate},
  Communications on Pure and Applied Mathematics: A Journal Issued by the
  Courant Institute of Mathematical Sciences, 60 (2007), pp.~1443--1487.

\bibitem{liu2009error}
{\sc J.-G. Liu, J.~Liu, and R.~L. Pego}, {\em Error estimates for
  finite-element {N}avier-{S}tokes solvers without standard inf-sup
  conditions}, Chinese Annals of Mathematics, Series B, 30 (2009),
  pp.~743--768.

\bibitem{liu2010stable}
{\sc J.-G. Liu and R.~Pego}, {\em Stable discretization of magnetohydrodynamics
  in bounded domains}, Communications in Mathematical Sciences, 8 (2010),
  pp.~235--251.

\bibitem{Nochetto2003Gauge}
{\sc R.~Nochetto and J.-H. Pyo}, {\em Error estimates for semi-discrete gauge
  methods for the {N}avier-{S}tokes equations}, Mathematics of computation, 74
  (2005), pp.~521--542.

\bibitem{serson2016velocity}
{\sc D.~Serson, J.~Meneghini, and S.~J. Sherwin}, {\em Velocity-correction
  schemes for the incompressible {N}avier--{S}tokes equations in general
  coordinate systems}, Journal of Computational Physics, 316 (2016),
  pp.~243--254.

\bibitem{shen1990long}
{\sc J.~Shen}, {\em Long time stability and convergence for fully discrete
  nonlinear {G}alerkin methods}, Applicable Analysis, 38 (1990), pp.~201--229.

\bibitem{shen1992error}
{\sc J.~Shen}, {\em On error estimates of projection methods for
  {N}avier-{S}tokes equations: first-order schemes}, SIAM Journal on Numerical
  Analysis, 29 (1992), pp.~57--77.

\bibitem{Shen2012Modeling}
{\sc J.~Shen}, {\em Modeling and numerical approximation of two-phase
  incompressible flows by a phase-field approach}, Multiscale Modeling and
  Analysis for Materials Simulation,  (2012), pp.~147--195.

\bibitem{shen2019new}
{\sc J.~Shen, J.~Xu, and J.~Yang}, {\em A new class of efficient and robust
  energy stable schemes for gradient flows}, SIAM Review, 61 (2019),
  pp.~474--506.

\bibitem{shen2007error}
{\sc J.~Shen and X.~Yang}, {\em Error estimates for finite element
  approximations of consistent splitting schemes for incompressible flows},
  Discrete \& Continuous Dynamical Systems-B, 8 (2007), pp.~663--675.

\bibitem{temam1969approximation}
{\sc R.~Temam}, {\em Sur l'approximation de la solution des {\'e}quations de
  {N}avier-{S}tokes par la m{\'e}thode des pas fractionnaires (ii)}, Archive
  for Rational Mechanics and Analysis, 33 (1969), pp.~377--385.

\bibitem{temam1983nonlinear}
{\sc R.~T{\'e}mam}, {\em Nonlinear functional analysis and {N}avier-{S}tokes
  equations}, SIAM, Philadelphia,  (1983).

\bibitem{Tema95}
{\sc R.~Temam}, {\em Navier-{S}tokes equations and nonlinear functional
  analysis}, vol.~66 of CBMS-NSF Regional Conference Series in Applied
  Mathematics, Society for Industrial and Applied Mathematics (SIAM),
  Philadelphia, PA, second~ed., 1995.

\bibitem{temam2001navier}
{\sc R.~Temam}, {\em {N}avier-{S}tokes equations: theory and numerical
  analysis}, vol.~343, American Mathematical Soc., 2001.

\bibitem{weinan1995projection}
{\sc E.~Weinan and J.-G. Liu}, {\em Projection method {I}: convergence and
  numerical boundary layers}, SIAM Journal on Numerical Analysis,  (1995),
  pp.~1017--1057.

\bibitem{wu2022new}
{\sc K.~Wu, F.~Huang, and J.~Shen}, {\em A new class of higher-order decoupled
  schemes for the incompressible {N}avier-{S}tokes equations and applications
  to rotating dynamics}, Journal of Computational Physics, 458 (2022),
  p.~111097.

\end{thebibliography}

\end{document}